\renewcommand*\subjclass[2][2000]{%
  \def\@subjclass{#2}%
  \@ifundefined{subjclassname@#1}{%
    \ClassWarning{\@classname}{Unknown edition (#1) of Mathematics
      Subject Classification; using '1991'.}%
  }{%
    \@xp\let\@xp\subjclassname\csname subjclassname@#1\endcsname
  }%
}
\newtheorem{theorem}{Theorem}[section]
\newtheorem{lemma}[theorem]{Lemma}
\newtheorem{corollary}[theorem]{Corollary}
\newtheorem{proposition}[theorem]{Proposition}
\theoremstyle{definition}
\newtheorem{definition}[theorem]{Definition}
\newtheorem{conjecture}[theorem]{Conjecture}
\newtheorem{example}[theorem]{Example}
\newcommand{\Mod}{\mathrm{Mod}\,}
\newtheorem{remark}[theorem]{Remark}
\numberwithin{equation}{section}
\newcounter{minutes}\setcounter{minutes}{\time}
\newcounter{hours}\setcounter{hours}{\time}
\begin{document}


\title[Deformations of Annuli with Smallest Mean Distortion]{Deformations of Annuli on Riemann surfaces with Smallest Mean Distortion}
\subjclass{58E20,30F45}

\keywords{Harmonic maps, Riemann surfaces, Modulus of annuli, Finite
distortion, Radial metric}
\author{David Kalaj}
\address{University of Montenegro, Faculty of Natural Sciences and
Mathematics, Cetinjski put b.b. 8100 Podgorica, Montenegro}
\email{davidk@t-com.me} 

\maketitle

\begin{abstract}
Let $A$ and $A'$ be two circular annuli and let $\rho$ be a radial
metric defined in the annulus $A'$. Consider the class $\mathcal
H_\rho$ of $\rho-$harmonic mappings between $A$ and $A'$. It is
proved recently by Iwaniec, Kovalev and Onninen that, if $\rho=1$
(i.e. if $\rho$ is Euclidean metric) then $\mathcal H_\rho$ is not
empty if and only if there holds the Nitsche condition (and thus is
proved the J. C. C. Nitsche conjecture). In this paper we formulate
a condition (which we call $\rho-$Nitsche conjecture) with
corresponds to $\mathcal H_\rho$ and define $\rho-$Nitsche harmonic
maps. We determine the extremal mappings with smallest mean
distortion for mappings of annuli w.r. to the metric $\rho$. As a
corollary, we find that $\rho-$Nitsche harmonic maps are Dirichlet
minimizers among all homeomorphisms $h:A\to A'$. However, outside
the $\rho$-Nitsche condition of the modulus of the annuli, within
the class of
homeomorphisms, no such energy minimizers exist.  
This extends some recent results of Astala, Iwaniec and Martin
(ARMA, 2010) where it is considered the case $\rho=1$ and
$\rho=1/|z|$.
\end{abstract}
\section{Introduction}
\subsection{Mappings
of finite distortion} A homeomorphism $w = f (z)$ between planar
domains ƒ$\Omega$ and ƒ$D$ has finite distortion if

a) $f$ lies in the Sobolev space $W^{1,1}_{loc} (\Omega,ƒD)$ of
functions whose first derivatives are locally integrable, and

b) $f$ satisfies the distortion inequality $$| f_{\bar z} | \le \mu
(z)| f_z |,$$ $0\le \mu(z) < 1$ almost everywhere in ƒ$\Omega$. Such
mappings are generalizations of quasiconformal homeomorphisms where
one works with the stronger assumption $\mu(z)\le k < 1$. Mappings
of finite distortion have found considerable interest in geometric
function theory and the mathematical theory of elasticity. A
comprehensive overview of the theory of mappings of finite
distortion in two-dimensions can be found in \cite{ar1}. The
Jacobian determinant of a mapping $f$ of finite distortion is
non-negative almost everywhere, since $$J_f(z)=|f_z|^2-|f_{\bar
z}|^2=(1-\mu(z)^2)|f_z|^2\ge 0.$$

The distortion function of particular interest to us in this article
is defined by the rule \begin{equation}\label{disor}K(z, f ) =
\frac{| f_z |^2 + | f_{\bar z} |^2 |}{ |f_z |^2 - | f_{\bar z} |^2}
=\frac{\|Df(z)\|^2}{J_ f( z )}\end{equation} if $J_f (z)
> 0.$ Here $$\|A\|^2
=\frac{1}{2}\mathrm{Tr}(A^TA)$$ is the square of mean
Hilbert-Schmidt norm. We conveniently set $K(z, f ) = 1$ if $f_z =
f_{\bar z} = 0$. Notice that then $K(z, f )=1$ and we have the
equality $K(z, f ) = 1$ if and only if $f$ is conformal, by the
Looman Menchoff theorem.
\subsection{Radial metrics}

By $\Bbb U$ we denote the unit disk $\{z:|z| <1\}$, by
$\overline{\Bbb C}$ is denoted the extended complex plane.  Let
$\Sigma$ be a Riemannian surface over a domain $C$ of the complex
plane or over $\overline{\Bbb C}$ and let $p: C\mapsto \Sigma$ be a
universal covering. Let $\rho_\Sigma$ be a conformal metric defined
in the universal covering domain $C$ or in some chart $D$ of
$\Sigma$. It is well-known that $C$ can be one of three sets: $\Bbb
U$, $\Bbb C$ and $\overline{\Bbb C}$. Then the distance function is
defined by

$$d(a,b) = \inf_{a,b\in
\gamma}\int_0^1\rho_\Sigma(\tilde\gamma(t))|\tilde\gamma'(t)|dt,$$
where $\tilde \gamma$, $\tilde\gamma(0)=0$, is a lift of $\gamma$,
i.e. $p(\tilde\gamma(t)) = \gamma(t)$, $\gamma(0) = a$, $\gamma(1) =
b$.

The Gauss curvature of the surface (and of the metric $\rho_\Sigma$)
is given by

$$K = -\frac{\Delta \log \rho_\Sigma}{\rho_\Sigma^2}.$$

In this paper we will consider those surfaces $\Sigma$, whose metric
have the form

$$\rho_\Sigma(z) = h(|z|^2),$$ defined in some chart $A'=\{z:\tau< |z|<\sigma\} $ of $\Sigma$
(not necessarily in the whole universal covering surface). Here $h$
is an positive twice differentiable function. We call these metrics
\emph{radial symmetric}. \begin{definition}\label{define} The radial
metric $\rho$ is called a \emph{regular metric} if $$\inf_{\tau< s<
\sigma}s\rho(s)=\lim_{s\to \tau+0}s\rho(s)$$ and has bounded Gauss
curvature $K$.
\end{definition}

Euclidean metric $\rho(z)=1$, and the metric $\rho(z)=\frac{1}{|z|}$
are regular metrics which are considered by Astala, Iwaniec and
Martin in the paper \cite{ar}. The authors settled corresponding
problems of deformations of annuli with smallest mean distortion
with respect to these two metrics. The aim of this paper is to
extend the results in \cite{ar} for all regular metrics (see
Theorem~\ref{or}, Theorem~\ref{te}, Corollary~\ref{co} and
Corollary~\ref{ro}).

In { Appendix} below it is showed that, most of known metrics with
constant Gauss curvatures defined in annuli are regular metrics.
Such metrics are for example, hyperbolic metric
$$\lambda(z) = \frac{2}{1-|z|^2},\;\;\; K=-1$$ defined in the unit disk $\mathbb U$ and Riemann metric $$\eta(z) =
\frac{2}{1+|z|^2},\;\;\; K=1$$ defined in the Riemann sphere
$S^2:=\overline{ \mathbb C}$.

\subsection{Harmonic mappings between Riemann surfaces} Let $(M,\sigma)$ and $(N,\rho)$ be
Riemann surfaces with metrics $\sigma$ and $\rho$, respectively. If
a mapping $f:(M,\sigma)\to(N,\rho)$ is $C^2$, then $f$ is said to be
harmonic (to avoid the confusion we will sometimes
say$\rho$-harmonic) if

\begin{equation}\label{el}
f_{z\overline z}+{(\log \rho^2)}_w\circ f f_z\,f_{\bar z}=0,
\end{equation}
where $z$ and $w$ are the local parameters on $M$ and $N$
respectively. Also $f$ satisfies \eqref{el} if and only if its Hopf
differential
\begin{equation}\label{anal}
\Psi=\rho^2 \circ f f_z\overline{f_{\bar z}}
\end{equation} is a
holomorphic quadratic differential on $M$.

For $g:M \mapsto N$ the energy integral is defined by

\begin{equation}\label{harel} E_\rho[g]=\int_{M}
(|\partial g|^2+|\bar \partial g|^2) dV_\sigma,
\end{equation}
where $\partial g$, and $\bar \partial g$ are the partial
derivatives taken with respect to the metrics $\varrho$ and
$\sigma$, and $dV_\sigma$ is the volume element on $(M,\sigma)$.
Assume that energy integral of $f$ is bounded. Then $f$ is harmonic
if and only if $f$ is a critical point of the corresponding
functional where the homotopy class of $f$ is the range of this
functional. For this definition and the basic properties of harmonic
maps see \cite{sy}.

 It follows from the definition that,
\begin{lemma}\label{poli} If $a$ is holomorphic and $f$
is harmonic, then $f\circ a$ is harmonic.\end{lemma}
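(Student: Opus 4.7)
The plan is to verify equation \eqref{el} directly for $g = f\circ a$ by a chain-rule computation, using crucially that $a$ is holomorphic, i.e.\ $a_{\bar z}=0$ (equivalently $\overline{a}_z=0$).

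First I would compute the first-order Wirtinger derivatives of $g(z)=f(a(z))$. Because $a$ depends only on $z$ and $\overline{a}$ only on $\bar z$, the mixed chain-rule terms drop out, giving
\begin{equation*}
g_z = f_w\!\circ a \cdot a'(z),\qquad g_{\bar z}= f_{\bar w}\!\circ a \cdot \overline{a'(z)}.
\end{equation*}
Differentiating $g_z$ in $\bar z$ and again using $a_{\bar z}=0$, only the $f_{w\bar w}$ contribution survives, so
\begin{equation*}
g_{z\bar z} = |a'(z)|^{2}\, f_{w\bar w}\!\circ a.
\end{equation*}

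Substituting these expressions into the left-hand side of \eqref{el} for $g$ (and noting $(\log\rho^{2})_w\circ g=(\log\rho^{2})_w\circ f\circ a$) yields
\begin{equation*}
g_{z\bar z}+(\log\rho^{2})_w\!\circ g\cdot g_z g_{\bar z} \;=\; |a'(z)|^{2}\Bigl[f_{w\bar w}+(\log\rho^{2})_w\!\circ f\cdot f_w f_{\bar w}\Bigr]\circ a,
\end{equation*}
and the bracket vanishes because $f$ is $\rho$-harmonic. Thus $g$ satisfies \eqref{el}.

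There is no real obstacle here; the content is just the chain rule together with the fact that holomorphicity of $a$ annihilates half of the terms. An equivalent one-line argument would use the Hopf differential \eqref{anal}: the pullback under the holomorphic map $a$ of the holomorphic quadratic differential $\Psi=\rho^{2}\!\circ f\, f_z\overline{f_{\bar z}}$ is $(a')^{2}\,\Psi\circ a$, which is again holomorphic and coincides with the Hopf differential of $f\circ a$, so $f\circ a$ is harmonic.
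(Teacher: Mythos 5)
Your proof is correct and is precisely the argument the paper intends: the paper offers no proof, asserting the lemma "follows from the definition," and your chain-rule verification of \eqref{el} (or equivalently the observation that the Hopf differential pulls back to $(a')^{2}\Psi\circ a$, still holomorphic) is the standard way to fill that in. Both of your routes are valid and consistent with the paper's framework.
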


Using the fact that the function defined in \eqref{anal} is
holomorphic, the following well known lemma can be proved (see e.g.
\cite{km}).

\begin{lemma}\label{le} Let $(S_1, \rho_1)$ and $(S_2,\rho_2)$ and
$(R,\rho)$ be three Riemann surfaces. Let $g$ be an isometric
transformation of the surface $S_1$ onto the surface $S_2$:
$$\rho^2_1(\omega)|d\omega|^2=\rho^2_2(w)|dw|^2, \, \, w=g(\omega).$$
Then $f:R\mapsto S_1$ is $\rho_1$- harmonic if and only if $g\circ
f: R \mapsto S_2$ is $\rho_2$- harmonic. In particular, if $g$ is an
isometric self-mapping of $S_1$, then $f$ is $\rho_1$- harmonic if
and only if $g\circ f$ is $\rho_1$- harmonic.
\end{lemma}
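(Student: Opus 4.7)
The plan is to show that the Hopf differentials of $f$ and of the composition $h := g \circ f$ coincide as quadratic differentials on $R$; the conclusion then follows at once from the characterization in \eqref{anal} that $\rho$-harmonicity is equivalent to holomorphicity of the associated Hopf differential.

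First I would observe that every isometry between Riemann surfaces is either conformal ($g_{\bar\omega}=0$) or antiholomorphic ($g_\omega=0$), and since the argument is parallel I start with the conformal case. The isometry identity $\rho_1^2(\omega)|d\omega|^2 = \rho_2^2(g(\omega))|dw|^2$ then reduces to the pointwise relation $\rho_1^2(\omega) = \rho_2^2(g(\omega))|g_\omega(\omega)|^2$. The chain rule together with holomorphicity of $g$ gives $h_z = g_\omega(f)\, f_z$ and $h_{\bar z} = g_\omega(f)\, f_{\bar z}$, so a direct substitution into \eqref{anal} yields
\[
\Psi_h = \rho_2^2(g\circ f)\,h_z\,\overline{h_{\bar z}} = \rho_2^2(g\circ f)\,|g_\omega(f)|^2\, f_z \overline{f_{\bar z}} = \rho_1^2(f)\, f_z \overline{f_{\bar z}} = \Psi_f.
\]

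For the antiholomorphic case I would use $h_z = g_{\bar\omega}(f)\,\overline{f_{\bar z}}$ and $h_{\bar z} = g_{\bar\omega}(f)\,\overline{f_z}$, while the isometry identity now becomes $\rho_1^2 = \rho_2^2(g)\,|g_{\bar\omega}|^2$; the extra complex conjugates rearrange so that the product $h_z\,\overline{h_{\bar z}}$ once again collapses to $|g_{\bar\omega}(f)|^2 f_z \overline{f_{\bar z}}$, and $\Psi_h = \Psi_f$ as before. With the equality $\Psi_h = \Psi_f$ in hand, the lemma is immediate: $f$ is $\rho_1$-harmonic iff $\Psi_f$ is holomorphic on $R$ iff $\Psi_h$ is holomorphic on $R$ iff $g \circ f$ is $\rho_2$-harmonic. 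The self-isometry statement is the specialization $(S_1,\rho_1)=(S_2,\rho_2)$.

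The only real subtlety is the bookkeeping in the antiholomorphic branch, where the chain rule swaps $f_z$ with $\overline{f_{\bar z}}$; but the isometric relation is precisely what is needed to absorb $|g_{\bar\omega}|^2$, and no regularity beyond that already embedded in the Hopf differential characterization is required. I would write the conformal computation in detail and simply indicate the antiholomorphic one as a mirror argument.
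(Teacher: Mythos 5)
Your argument is correct and is precisely the route the paper indicates: the paper gives no written proof of this lemma, merely noting that it ``can be proved'' using the holomorphicity of the Hopf differential \eqref{anal} (citing \cite{km}), and your verification that $\Psi_{g\circ f}=\Psi_f$ in both the holomorphic and antiholomorphic branches carries that out. The one caveat is that you rely on the paper's asserted equivalence between harmonicity and holomorphicity of the Hopf differential, which in general is only an implication (the converse can fail where $f_z f_{\bar z}\equiv 0$); a direct substitution of $g\circ f$ into the Euler--Lagrange equation \eqref{el}, using $(\log\rho_1^2)_\omega=(\log\rho_2^2)_w(g)\,g'+g''/g'$, sidesteps this and involves essentially the same bookkeeping.
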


\begin{example}
Let  $\rho$ be the Riemann metric $$\rho=\frac{2}{1+|z|^2}.$$
Equation \eqref{el} becomes
\begin{equation}\label{rimequ}u_{z\bar
z}-\frac{2\bar u}{1+|u|^2}u_z\cdot u_{\bar z}=0.\end{equation}

Notice this important example. The Gauss map of a surface $\Sigma$
in $\Bbb R^3$ sends a point on the surface to the corresponding unit
normal vector $\mathbf{n}\in \overline{\Bbb C} \cong S^2$. In terms
of a conformal coordinate $z$ on the surface, if the surface has
{\it constant mean curvature}, its Gauss map $\mathbf{n}: \Sigma
\mapsto \overline{\Bbb C}$, is a Riemann harmonic map \cite{rv}.
\end{example}

\begin{example}
If $u:\Bbb U\mapsto \Bbb U$ is a harmonic mapping with respect to
the hyperbolic metric $$\lambda=\dfrac{2}{1-|z|^2}$$ then
Euler-Lagrange equation of $u$ is
\begin{equation}\label{eqpo}
u_{z\bar z}+\frac{2\bar u}{1-|u|^2}u_z\cdot u_{\bar z}=0.
\end{equation}

An important example of hyperbolic harmonic mapping is the Gauss map
of a space-like surfaces with constant mean curvature $H$ in the
Minkowski $3$-space $M^{2,1}$ (see \cite{ctr}, \cite{mt} and
\cite{wan}).

\end{example}

\section{Radial $\rho-$harmonic mappings and $\rho-$Nitsche conjecture}
The conjecture in question concerns the existence of a harmonic
homeomorphism between circular annuli $A(r,1)$ and $A(\tau,\sigma)$,
and is motivated in part by the existence problem for
doubly-connected minimal surfaces with prescribed boundary. In 1962
Nitsche \cite{n} observed that the image annulus cannot be too thin,
but it can be arbitrarily thick (even a punctured disk). Then he
conjectured that for such a mapping to exist we must have the
following inequality, now known as the Nitsche bound:
$$\frac{\sigma}{\tau}\ge \frac 12\left(\frac 1r+r\right).$$ For some results concerning the partial solution of Nitsche conjecture see
papers \cite{israel}, \cite{weit} and \cite{Al}. For the
generalization of this conjecture to $\Bbb R^n$ and some related
results we refer to \cite{jmaa}. For the case of hyperbolic harmonic
mappings we refer to \cite{h}. Some other generalization has been
done in \cite{israel} (see Proposition~\ref{fikin} below). The
Nitsche conjecture for Euclidean harmonic mappings is settled
recently in \cite{conj} by Iwaniec, Kovalev and Onninen, showing
that, only radial harmonic mappings
$$h(\zeta) = C\left(\zeta-\frac{\omega}{\overline{\zeta}}\right),$$
$C\in\mathbb C$, $\omega\in \mathbb R$, $|C|(1-\omega)=\sigma$,
which inspired the Nitsche conjecture, make the extremal distortion
of rounded annuli.

In this section, we will state a similar conjecture with respect to
$\rho-$ harmonic mappings. In order to this, we will find examples
of radial $\rho$-harmonic maps between annuli. We put
$$w(z) = g(s) e^{it},\ \ z=se^{it}$$ where $g$ is a increasing or a decreasing function to
be chosen. This will include all radial harmonic mappings.

Direct calculations yield
\begin{equation}\label{prima}w_{z\bar z} = \frac 14 \Delta w = \frac{1}{4s^2}\left(s^2
w_{ss} +s w_s + w_{t t}\right) \end{equation} and
\begin{equation}\label{seconda}w_z w_{\bar z} =
\frac{1}{4s^2} (s^2w_s^2 - w_t^2).\end{equation} Inserting this into
harmonic equation \eqref{el}, we obtain
$$s^2 g'' + sg' - g +\frac{2\rho_w}{\rho} (s^{2}{g'}^2 - g^2) = 0.$$
Let $s = e^x$ and
\begin{equation}\label{varo}\varrho = \dfrac{1}{\rho}.\end{equation} Put $y(x) = g(e^x)$. Then the corresponding
differential equation is
$$y''-y = \frac{\varrho'(y)}{\varrho(y)}(y'^2-y^2).$$ After some
changes we obtain that, the general solution to this equation is
$$x+c_1 = \int \frac{dy}{\sqrt{y^2+c\varrho^2 }},$$ where $c$ and
$c_1$ are certain constants. The mapping $w$ given by
\begin{equation}\label{w}w(se^{it}) =
q^{-1}(s)e^{it},\end{equation} where
\begin{equation}\label{var}q(s) =\exp(\varphi(s))=\exp\left(\int_{\sigma}^s
\frac{dy}{\sqrt{y^2+c\varrho^2 }}\right), \ \tau\le s\le \sigma,
\end{equation} and $c$ satisfies the condition:
\begin{equation}\label{unt} y^2+c\varrho^2(y)\ge 0, \;\text{for} \; \tau\le s\le \sigma
,\end{equation} is a $\rho$-harmonic mapping between annuli
$A=A(r,1)$ and $A'=A(\tau,\sigma)$, where
\begin{equation}\label{rr}r=\exp\left(\int_{\sigma}^\tau \frac{dy}{\sqrt{y^2+c\varrho^2
}}\right).\end{equation} The harmonic mapping $w$ is normalized by
$$w( e^{it})=\sigma e^{it}.$$ The mapping $w=h^c(z)$ is a diffeomorphism, and we will call it \emph{$\rho$-Nitsche mapp}. From now on we will assume that the
metric $\rho$ is regular in the sense of Definition~\ref{define}.
Then \eqref{unt} is equivalent to
\begin{equation}\label{unti}\tau^2+c\varrho^2(\tau)\ge
0.\end{equation} Accordingly, for $c=-\tau^2\rho^2(\tau)$, we have
well defined function $$q^{\#}(s)=\exp\left(\int_{\sigma}^s
\frac{dy}{\sqrt{y^2-\tau^2\rho^2(\tau)\varrho^2 }}\right), \tau\le
s\le \sigma.$$ The mapping $h^{\#}:A\to A'$ defined by
$h^{\#}(se^{it})=(q^{\#})^{-1}(s)e^{it}$ is called the
\emph{critical Nitsche map}.

For $\tau\le s\le \sigma\le 1$ we have:
\begin{equation}\label{case}s^2\varphi'(s)^2-1  =\frac{-c}{s^2\rho^2+c}\left\{\begin{array}{ll}
                                                      \le 0, & \hbox{if $c\ge 0$;} \\
                                                      \ge 0, & \hbox{if $-\tau^2\rho^2(\tau)\le c\le 0$.}
                                                    \end{array}
                                                  \right.\end{equation}
Notice that, the mapping $$f^{c}(se^{it})=q(s)e^{it}:A\to A'$$ is
the inverse of the harmonic diffeomorphism $w$.
\begin{conjecture} {\it Let $\rho$ be a regular metric. 
 If $r<1$, and there exists a $\rho-$ harmonic mapping of the annulus
$A'=A(r,1)$ onto the annulus $A=A(\tau,\sigma)$, then
\begin{equation}\label{nit}r\ge \exp\left(\int_{\sigma}^{\tau}
\frac{\rho(y)dy}{\sqrt{y^2\rho^2(y)-\tau^2\rho^2(\tau)
}}\right).\end{equation}}
\end{conjecture}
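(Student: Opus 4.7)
The plan is to adapt the strategy of Iwaniec--Kovalev--Onninen \cite{conj}, who settled the case $\rho\equiv 1$, to the general regular radial metric. Let $f:A(r,1)\to A(\tau,\sigma)$ be a $\rho$-harmonic homeomorphism and form its Hopf differential $\Psi(z)=\rho^{2}(f(z))\,f_{z}(z)\overline{f_{\bar z}(z)}$, which is holomorphic on $A(r,1)$ by \eqref{anal}.

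\textbf{Step 1 (form of the Hopf differential).} The first step is to show that $\Psi(z)=c/z^{2}$ for some $c\in\mathbb{R}$. Since $f$ sends each boundary circle $\{|z|=\alpha\}$ ($\alpha\in\{r,1\}$) diffeomorphically onto a circle $\{|w|=\beta\}\subset A(\tau,\sigma)$, these boundary circles are trajectories of the holomorphic quadratic differential $\Psi\,dz^{2}$, so that $\Psi\,dz^{2}$ is real along them. On the circle $z=\alpha e^{i\theta}$ one has $dz^{2}=-z^{2}d\theta^{2}$, hence $z^{2}\Psi(z)\in\mathbb{R}$ when $|z|\in\{r,1\}$. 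A Laurent expansion $z^{2}\Psi(z)=\sum a_{n}z^{n}$, combined with reality on two concentric circles of distinct radii, forces $a_{n}=0$ for $n\neq 0$ and $a_{0}\in\mathbb{R}$.

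\textbf{Step 2 (reduction to radial normal form).} With $\Psi=c/z^{2}$ I would write $f(z)=g(z)e^{i\vartheta(z)}$ and substitute into \eqref{el}. Following the ideas of \cite{conj}, the boundary identifications $g|_{|z|=1}=\sigma$ and $g|_{|z|=r}=\tau$, combined with the Hopf constraint, should force $\vartheta(z)-\arg z$ to be constant; after a rotation $f$ therefore takes the normal form $f(se^{it})=g(s)e^{it}$. Formulas \eqref{prima}--\eqref{seconda} then reduce the harmonic equation to the first integral
\begin{equation*}
\rho^{2}(g(s))\bigl(s^{2}(g'(s))^{2}-g(s)^{2}\bigr)=c,
\end{equation*}
which after separation of variables and the substitution $\varrho=1/\rho$ is exactly \eqref{var}.

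\textbf{Step 3 (admissibility and monotonicity).} Since $s^{2}(g'(s))^{2}\ge 0$, the first integral forces $y^{2}\rho^{2}(y)+c\ge 0$ for every $y\in[\tau,\sigma]$. By the regularity assumption (Definition \ref{define}), $\inf_{\tau<y<\sigma}y\rho(y)=\tau\rho(\tau)$, so $c\ge -\tau^{2}\rho^{2}(\tau)$. Integrating $ds/s=dg/\sqrt{g^{2}+c/\rho^{2}(g)}$ from $(s,g)=(1,\sigma)$ to $(r,\tau)$ gives
\begin{equation*}
\ln r=\int_{\sigma}^{\tau}\frac{dy}{\sqrt{y^{2}+c/\rho^{2}(y)}},
\end{equation*}
and a direct differentiation shows $c\mapsto r(c)$ is strictly increasing on $[-\tau^{2}\rho^{2}(\tau),\infty)$. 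Consequently $r$ attains its minimum at $c=-\tau^{2}\rho^{2}(\tau)$, and this minimum coincides with the right-hand side of \eqref{nit}.

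\textbf{Main obstacle.} The crux is Step 2: the rigidity statement that no non-radial $\rho$-harmonic homeomorphism between annuli shares the Hopf differential $c/z^{2}$. In \cite{conj} this rigidity is the technical heart of the argument and exploits the specific structure of the Euclidean harmonic system. For a general radial $\rho$, the nonlinearity $(\log\rho^{2})_{w}\circ f$ in \eqref{el} couples the modulus and the argument of $f$ in a delicate way, and extending the Iwaniec--Kovalev--Onninen rigidity to this setting is precisely why the inequality is posed here as a conjecture rather than as a theorem.
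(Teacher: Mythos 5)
The statement you are asked about is posed in the paper as a \emph{conjecture}: the paper does not prove it, and offers only the derivation of the radial $\rho$-harmonic maps in Section~2 (which shows the bound \eqref{nit} is exactly the extremal value of $r$ over the radial family, attained at $c=-\tau^2\rho^2(\tau)$ by the regularity hypothesis) together with the partial result Proposition~\ref{fikin} quoted from \cite{kalaj}. Your Steps~2 (the computation of the first integral) and~3 reproduce precisely this derivation, so what you have actually established is only the conjectured inequality restricted to radial maps --- which is the paper's motivation for the conjecture, not a proof of it.

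The gap you flag in Step~2 is genuine, but it sits even earlier than you say. First, Step~1 is not justified: the reality of $z^2\Psi(z)$ on a boundary circle is equivalent to the orthogonality $\Re\bigl(f_s\,\overline{f_t}\bigr)=0$ of the radial and angular directions in the pullback metric there, which is the \emph{natural (free) boundary condition} of the variational problem, not a consequence of merely mapping $\{|z|=\alpha\}$ onto a circle; a harmonic homeomorphism with Dirichlet-type boundary data $f(e^{it})=\sigma e^{i\gamma(t)}$, $\gamma$ a non-linear circle homeomorphism, will in general have $\Psi\neq c/z^2$. Second, the rigidity you hope for in Step~2 cannot hold as stated, because non-radial ($\rho$-)harmonic homeomorphisms between round annuli do exist; indeed, in \cite{conj} Iwaniec--Kovalev--Onninen do not argue by reducing to radial maps but prove the modulus inequality for arbitrary harmonic homeomorphisms via integral estimates over concentric circles. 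So the proposal does not prove the statement, and since the source itself leaves it open, there is no proof in the paper to compare against beyond the radial computation you have already matched.
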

{\it Notice that if $\rho =1$, then this conjecture coincides with
standard Nitsche conjecture.}

The following example assert that, in the settings of the previous
conjecture a upper bound for $r$ do not holds. In other words the
image domain it can be arbitrarily thick (even a punctured disk).
This differs harmonic mappings from conformal mappings and
quasiconformal mappings.
\begin{example}
Let $\rho$ be a metric defined on the unit disk $\mathbb U$, and
take $A'=A(0,1)$ ($0=\tau<\sigma =1$). Let $c>0$ and
$$q(s)=
\exp\left(\int_1^s\frac{\rho(s)ds}{\sqrt{c+s^2\rho^2(s)}}\right).$$
Define $w(z) =q^{-1}(s)e^{it}$. Then $w$ is a $\rho-$harmonic
diffeomorphism between annuli $A(r_c,1)$ and the degenerated annuli
$A(0,1)$. Here
$$0<r_c=
\exp\left(-\int_0^1\frac{\rho(s)ds}{\sqrt{c+s^2\rho^2(s)}}\right)<1$$
and $$\lim_{c\to+\infty} r_c=1.$$
\end{example}
We call the inequality \eqref{nit} $\rho-${\it Nitsche condition}.
The converse inequality \begin{equation}\label{notnit}r<
\exp\left(\int_{\sigma}^{\tau}
\frac{\rho(y)dy}{\sqrt{y^2\rho^2(y)-\tau^2\rho^2(\tau)
}}\right),\end{equation} we will call {\it the fatness condition}.

If $r<1$ satisfies the condition \eqref{nit}, then by continuity
argument, there exists an $c$ satisfying
$$c\ge -\tau^2\rho^2(\tau)$$ such that
\begin{equation}\label{cnit}r= \exp\left(\int_{\sigma}^{\tau}
\frac{\rho(y)dy}{\sqrt{y^2\rho^2(y)+c}}\right).\end{equation} The
following theorem is a partial result in solving the previous
conjecture
\begin{proposition}\cite{kalaj}\label{fikin}
Assume that $\rho$ is a metric defined in the disk $\{z:|z|\le
\sigma\}$ with positive or negative Gauss curvature $K(z)$ and let
$$h(x)=\int_{0}^x\rho(t)dt, \ \ 0\le x\le \sigma .$$ If there exists a $\rho-$harmonic diffeomorphism
between the annuli $A=A(r,1)=\{ z\in \Bbb C: r<|z|< 1\}$  and $A'=\{
z\in \Bbb C: \tau<|z|< \sigma \}$, then
\begin{equation}\label{desired}
\frac{h(\sigma)}{h(\tau)}\geq 1+\frac{\tau}{2h(\tau)}\log^2
{r}\left\{
                             \begin{array}{ll}
                                \left(t\rho(t)\right)'|_{t=\tau}, & \hbox{if $K$ is negative;} \\
                               \left(t\rho(t)\right)'|_{t=\sigma}, & \hbox{if $K$ is positive.}
                             \end{array}
                           \right.
\end{equation}
\end{proposition}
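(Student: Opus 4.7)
The plan is to reduce the problem to a one-variable inequality via circular averaging, and then to sharpen a trivial first-order bound into a quadratic one by Taylor-expanding at the extremal boundary circle. The key auxiliary function is the squared $\rho$-geodesic distance from the origin to $f$, namely $U(z)^2$ with $U(z)=h(|f(z)|)$. Since $h'=\rho$, a direct computation using the $\rho$-harmonic equation \eqref{el} together with the Gauss curvature formula $K=-\Delta\log\rho/\rho^2$ expresses $\Delta U^2$ as a sum of a nonnegative energy-density term and a term proportional to $-K|f|^2\rho^2(|f|)$. Hence $U^2$ is subharmonic when $K<0$ and superharmonic when $K>0$, which is the usual manifestation of the fact that squared distance is convex along harmonic maps into nonpositively curved targets. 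Lemma~\ref{le} allows one, if convenient, to pass to the isometric ``warped-product'' coordinates $R=h(r)$ in which the target metric takes the form $dR^2+(r\rho(r))^2 d\theta^2$ and $K$ becomes $-\phi''/\phi$ for $\phi(R)=r\rho(r)$.

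By the standard averaging principle, the circular average $M(s)=\frac{1}{2\pi}\int_0^{2\pi}U^2(se^{it})\,dt$ is then convex (resp.\ concave) in $\log s$, with boundary values $M(r)=h^2(\tau)$, $M(1)=h^2(\sigma)$ coming from $f$ being a homeomorphism of the closed annuli. Convexity alone yields only the trivial bound $h(\sigma)\ge h(\tau)$. To extract the quadratic improvement in $\log r$, expand $M$ to second order at the endpoint that binds the inequality: $s=r$ when $K<0$, and $s=1$ when $K>0$. On that circle $f$ maps onto the corresponding boundary circle of the target annulus, and the diffeomorphism property together with an orientation argument forces the tangential derivative of $f$ to be, on average, at least $\tau$ (resp.\ $\sigma$). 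Differentiating $U^2=h^2(|f|)$ twice along the circle, using $h'=\rho$, and collecting terms produces exactly the factor $(t\rho(t))'$ evaluated at the binding endpoint. Integrating the resulting second-order inequality over $\log s\in[\log r,0]$ multiplies the boundary defect by $\log^2 r$ and yields \eqref{desired}.

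The principal obstacle will be carrying out the boundary expansion rigorously: one must combine the second radial derivative of $f$ (controlled via the $\rho$-harmonic equation) with the constrained first tangential derivative on the extremal circle, tracking signs carefully, in order to obtain exactly the coefficient $\tau(t\rho(t))'|_{t=\tau}/(2h(\tau))$. The case split in \eqref{desired} then reflects which endpoint provides the binding constraint for each sign of Gauss curvature, and a minor technical point is justifying the boundary regularity of $f$ enough to make the averaging argument work up to the boundary circles.
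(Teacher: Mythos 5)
Your overall strategy (compose a radial convex function on the target with $f$, average over circles, exploit convexity in $\log s$ plus a degree-one constraint) is the right family of ideas, but the proposal as written has three concrete defects that would prevent it from producing \eqref{desired}. First, the auxiliary function should be $u=h(|f|)$ itself, not $U^2=h(|f|)^2$: the target inequality is affine in $h$, namely $h(\sigma)\ge h(\tau)+\tfrac{c}{2}\log^2 r$, and running your scheme on $U^2$ yields at best $h(\sigma)^2\ge h(\tau)^2+h(\tau)\,c\log^2 r$, which is strictly weaker because $\sqrt{1+x}\le 1+x/2$. Second, the claimed identity for $\Delta U^2$ is wrong. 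Writing the target metric in geodesic polar coordinates $dR^2+G(R)^2d\theta^2$ with $R=h(t)$ and $G(R)=t\rho(t)$, the chain rule for harmonic maps gives
$$\Delta\bigl(h(|f|)\bigr)=G G'\bigl(R\circ f\bigr)\,|\nabla\arg f|^2=t\,(t\rho(t))'\big|_{t=|f|}\,|\nabla\arg f|^2 ,$$
and this coefficient equals $|f|>0$ when $\rho\equiv 1$, so it is emphatically not proportional to $-K$; the curvature enters only through the monotonicity of $t\mapsto t(t\rho(t))'$ (via $G''=-KG$), which decides whether the infimum of this coefficient over the target annulus is controlled by the value at $t=\tau$ or at $t=\sigma$. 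That, and not a choice of domain boundary circle, is the source of the case split in \eqref{desired}. Relatedly, superharmonicity of $U^2$ for $K>0$ would only bound $h(\sigma)$ from \emph{above}, whereas the proposition asserts a lower bound in both curvature signs.

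Third, and most seriously, a second-order Taylor expansion at one boundary circle cannot produce the $\log^2 r$ term: local information at $s=r$ does not propagate across the annulus. What is needed is the \emph{global} differential inequality $\ddot m(x)\ge c$ for $m(x)=\tfrac{1}{2\pi}\int_0^{2\pi}h(|f(e^{x+it})|)\,dt$ on all of $[\log r,0]$, obtained from the displayed identity together with the winding-number bound $\tfrac{1}{2\pi}\int_0^{2\pi}(\partial_t\arg f)^2\,dt\ge\bigl(\tfrac{1}{2\pi}\int_0^{2\pi}\partial_t\arg f\,dt\bigr)^2=1$ on \emph{every} circle $|z|=s$ (your ``tangential derivative at least $\tau$'' is the wrong quantity: the relevant object is $\partial_t\arg f$, whose circular mean is $1$ by degree, not $|f_t|$). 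One then integrates twice, using $\dot m(\log r)\ge 0$, which follows for free because $m\ge h(\tau)=m(\log r)$ everywhere, so the inner boundary is a minimum of $m$ --- no boundary regularity or second radial derivatives of $f$ are needed. With these corrections the argument closes; as proposed, it does not.
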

\section{Statement of the main results}

The classical formulations of the extremal Gr\"otsch and
Teichm\"uller problems are concerned with finding mappings
ƒ$\Omega\to D$ in some class (for instance, with free or prescribed
boundary values) which have smallest $L^\infty$-norm of the
distortion function, thus "extremal quasiconformal mappings". In
this article we shall investigate mappings in some class which
minimize integral means with respect to appropriate metrics  of the
distortion function $\mathbb K(z, f )$. The case of bounded simply
connected domains, without boundary data, is trivial; the extremals
are the conformal mappings of ƒ$\Omega$ onto ƒ$D$ asserted to exist
by the Riemann mapping theorem (the simply connected case where the
boundary data is prescribed is solved in \cite{ar2}). For the free
boundary problem, Astala, Iwaniec and Martin \cite{ar} considered
the first nontrivial case where there are conformal invariants;
namely doubly connected domains and, in particular, annuli. Given
two annuli $$ A'=\{w: \tau<|w|<\sigma\}, \;\;  A = \{z : r < |z| <
1\},$$ they consider homeomorphisms of finite distortion $f : A' \to
A$ with respect to the Euclidean metric and the metric $\rho =
1/|z|$. We shall consider the same problem but for an arbitrary
\emph{regular metric}. Here note that $| f |$ extends continuously
to $\overline{A'}$, with values $r$ and $1$ on the boundary of $A$.
We shall normalize our mappings in the obvious way so that $$| f
(z)| = r\; \text{for}\; |z| = \tau\; \text{and}\; | f (z)| =
\sigma\; \text{for}\; |z| = 1$$ Let $\mathcal F = \mathcal F(A',A )$
denote the family of all normalized homeomorphisms $f : A' \to A$ of
finite distortion. Since $A'$ and $A$ are certainly diffeomorphic
$\mathcal F\neq \emptyset$.

Let $z=x+iy=se^{it}$ and
$$dm(z) = dxdy=s ds dt$$ be the usual \emph{Lebesgue measure} on the complex plane $\Bbb C$.  The integral mean of the distortion function $K(z,
f )$ which concern us in this work is
$$\mathcal K_\rho[f]=\int_{A'}\mathbb K(z,f)\rho^2(z)dm(z),$$ where $\rho$ is a given \emph{regular metric}.
The minimization problem we address here is to evaluate the
following infimum
\begin{equation}\inf \{\mathcal K_\rho[f]: f\in\mathcal
F(A',A)\}.\end{equation}

Further, we should decide if the infimum is attained and, in that
case, prove uniqueness (up to the obvious rotational symmetry of the
annuli). The concept of the conformal modulus will prove useful in
proving our results.  It is convenient to take the following
definition of the modulus of an annulus $A(p,q):=\{z:p<|z|<q\}$
\begin{equation}\label{mod}\Mod(A(p,q)) = 2\pi \log \frac q p =
\int_{A(p,q)} \frac{dm(z)}{ |z|^2}.\end{equation} Note that, the
standard definition of modulus is indeed
$\mathrm{mod}(A(p,q))=\frac{1}{2\pi}\log \frac qp$. Every
topological annulus $R$ is conformally equivalent to a round annulus
$A$, and we can set $\Mod(R) = \Mod(A)$.

The main theorems of this paper are the following:

\begin{theorem}\label{or} Let $\rho$ be a regular metric. Let $A$ and $A'$ be annuli satisfying the condition
\eqref{nit}. Among all mappings $f\in \mathcal F(A, A')$ the infimum
of \begin{equation}\label{distortion}\int_{A'} \mathbb K(z,f)
\rho^2(z) dm(z)\end{equation} is attained by the function
\begin{equation}\label{cf}f^{c}(z)=\exp\left(i(t+\alpha)+\int_{\sigma}^s
\frac{\rho(y)dy}{\sqrt{y^2\rho^2(y)+c }}\right), \ \ z= se^{it},
\alpha\in[0,2\pi),\end{equation} where $c$ is given by \eqref{cnit}.
Its inverse $h^{c}$ is $\rho-$harmonic between annuli $A'$ and $A$.

\end{theorem}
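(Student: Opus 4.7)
The plan is to recast the mean distortion as the $\rho$-Dirichlet energy of the inverse mapping, reduce to radial competitors by symmetrization, and then solve a one-dimensional variational problem whose Euler--Lagrange equation is exactly the radial $\rho$-harmonic equation from Section~2.

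First, pass to the inverse $h = f^{-1} : A \to A'$. The identity $\|Dh(w)\|^2 = \|Df(z)\|^2/J_f(z)^2$ at $z = h(w)$ (coming from $Dh(w) = Df(z)^{-1}$), combined with the change of variable $dm(z) = J_h(w)\,dm(w)$, yields
\begin{equation*}
\mathcal K_\rho[f] = \int_{A'}\mathbb K(z,f)\rho^2(z)\,dm(z) = \int_A \|Dh(w)\|^2\rho^2(h(w))\,dm(w) = E_\rho[h],
\end{equation*}
so the theorem reduces to showing that $h^c$ minimizes $E_\rho$ among normalized homeomorphisms $h : A \to A'$ of finite distortion.

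Second, write $w = \zeta e^{i\theta}$ and $h(w) = P(\zeta,\theta)e^{iQ(\zeta,\theta)}$, so that
\begin{equation*}
2\|Dh\|^2\rho^2(P) = \rho^2(P)(P_\zeta^2 + P^2 Q_\zeta^2) + \tfrac{1}{\zeta^2}\rho^2(P)(P_\theta^2 + P^2 Q_\theta^2).
\end{equation*}
I discard the nonnegative terms $P^2 Q_\zeta^2$ and $P_\theta^2$, and exploit the degree-one winding identity $\int_0^{2\pi} Q_\theta\,d\theta = 2\pi$, Cauchy--Schwarz in $\theta$, and a rearrangement of $P(\zeta,\cdot)$ against the $\rho$-area form $\rho^2(s)s\,ds$ on $A'$, in order to replace $h$ by a radial competitor $\bar h(\zeta e^{i\theta}) = \bar P(\zeta)\, e^{i(\theta+\alpha)}$ without increasing the energy. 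The $\rho$-area identity $\int_A \rho^2(h)J_h\,dm = \int_{A'}\rho^2\,dm$, independent of $h$, pins down the correct boundary data $\bar P(r) = \tau$, $\bar P(1)=\sigma$ and produces
\begin{equation*}
E_\rho[h] \ge \pi\int_r^1 \rho^2(\bar P)\bigl(\zeta\bar P'(\zeta)^2 + \bar P(\zeta)^2/\zeta\bigr)\,d\zeta =: \mathcal E[\bar P],
\end{equation*}
with equality if and only if $h$ is already radial (up to a rotation of the target).

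Third, the Euler--Lagrange equation of $\mathcal E[P]$ is
\begin{equation*}
\zeta^2 P''(\zeta) + \zeta P'(\zeta) - P(\zeta) + \frac{\rho'(P)}{\rho(P)}\bigl(\zeta^2 P'(\zeta)^2 - P(\zeta)^2\bigr) = 0,
\end{equation*}
which coincides with the radial reduction of \eqref{el} carried out in \eqref{prima}--\eqref{var}. It admits the first integral $\rho^2(P)(\zeta^2 P'^2 - P^2) \equiv c$; separation of variables gives $\zeta = q(P)$ with $q$ as in \eqref{var}, and the boundary condition $q(\tau)=r$ forces the value of $c$ via \eqref{cnit}. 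The unique radial extremal with these boundary values is $P = q^{-1}$, whose lift is $h^c$ and whose inverse is the mapping $f^c$ of \eqref{cf}, completing the identification of the minimizer. The principal obstacle is the symmetrization of the second step: because $\rho^2(P)$ depends nonlinearly on $P$ and $\rho$ is not assumed monotone, classical Schwarz symmetrization is unavailable, and a careful combination of the winding identity, Cauchy--Schwarz in $\theta$, and rearrangement against the $\rho$-area form of $A'$ is needed to establish the lower bound and identify the equality cases; the other two steps are essentially routine.
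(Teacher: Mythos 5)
Your overall architecture (pass to the inverse, reduce to radial competitors, solve the radial ODE) is a reasonable guess, and your first and third steps are essentially sound: the identity $\mathcal K_\rho[f]=E_\rho[h]$ is exactly \eqref{po} in the paper, and the first integral $\rho^2(P)(\zeta^2P'^2-P^2)\equiv c$ of the radial Euler--Lagrange equation correctly reproduces \eqref{var} and \eqref{cnit}. But the entire weight of the proof rests on your second step, and there you have not given an argument --- you have only named the tools you would like to use and then conceded, in your own closing sentence, that the rearrangement is ``unavailable'' in its classical form and that something ``careful'' is ``needed.'' That is the theorem, not a proof of it. Concretely: after discarding $P^2Q_\zeta^2$ and $P_\theta^2$ and applying Cauchy--Schwarz to $\int_0^{2\pi}Q_\theta\,d\theta=2\pi$, you are left needing an inequality of the form $\int_0^{2\pi}\rho^2(P)P_\zeta^2\,d\theta\ \ge\ 2\pi\,\rho^2(\bar P)\bar P'^2$ for a single radial profile $\bar P$ that simultaneously controls the term $\int_0^{2\pi}\rho^2(P)P^2\,d\theta$; since $\rho^2$ is an arbitrary regular radial metric (not monotone, not log-convex), no rearrangement against the ``$\rho$-area form'' is known to deliver both bounds at once, and your equality statement (``if and only if $h$ is radial'') is likewise unsupported. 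A secondary gap: even granting the reduction, your step three only exhibits a critical point of the one-dimensional functional $\mathcal E[\bar P]$, which is not obviously convex because of the factor $\rho^2(\bar P)$, so minimality among radial profiles still requires an argument.

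For comparison, the paper never symmetrizes and never passes to the inverse in the proof of this theorem. It works directly with $\mathbb K(z,f)$ on $A'$ via the free-Lagrangian inequalities of Lemma~\ref{lepo}: for the \emph{specific} $\varphi$ built from the conjectured extremal, one has the pointwise bounds \eqref{ine} and \eqref{eni}, whose error terms are controlled by two quantities of modulus type, namely $\int_{A'}J_f/|f|^2\le\Mod A$ and $\int_{A^+}|f_s|^2/(s^2J_f)\ge\Mod A$ (Lemmas~1 and~3 of \cite{ar}), together with the winding bound $\int_0^{2\pi}|f_t|/|f|\,dt\ge2\pi$. Crucially the argument splits into the cases $c\ge0$ and $-\tau^2\rho^2(\tau)\le c\le0$, using \eqref{ine} in the first and \eqref{eni} in the second; the resulting lower bound telescopes exactly to $\int_{A'}\mathbb K(z,f^c)\rho^2\,dm$, and the equality analysis falls out of the equality cases of those same lemmas (leading to the system \eqref{equa}). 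If you want to salvage your route, the honest way is to import precisely these free-Lagrangian estimates; as written, your proposal has a hole exactly where the difficulty lives.
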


\begin{remark}
In the special cases where $\rho(z)= 1$, we easily obtain that
$$f^{c}(z)=C(c)\frac{z}{|z|}\left(|z|+\sqrt{|z|^2+c}\right)$$  which is the inverse of the Nitsche's
map $$h^c(\zeta) =
C'(c)\left(\zeta-\frac{\omega(c)}{\overline{\zeta}}\right),$$ where
$\omega(c)$ is a positive constant and $C(c),C'(c)\in \mathbb C$.

If $\rho(z) = |z|^{-1}$, and $z=se^{it}$, then $f^c$ is a power
function
$$f^c(z) = \sigma s^{-\alpha}|z|^{\alpha-1} z,\ \ \ \text{where}\ \
\alpha =\alpha(c)= \frac{\Mod(A')}{\Mod(A)}.$$ See
\cite[Theorem~1\&~Theorem~2]{ar} for the same conclusion.

\end{remark}

\begin{theorem}\label{te} Let $\rho$ be a regular metric.
Under the fatness condition \eqref{notnit}, the infimum at
\eqref{distortion} is not attained by any homeomorphism $f\in
\mathcal F(A,A')$ (more generally, it is not attained by any
continuous mapping of finite distortion of $A$ onto $A'$). Moreover
for the inverse of critical Nitsche map
$f^{\#}(se^{it})=e^{\varphi^{\#}(s)+it}$, where
\begin{equation}\label{wo}\varphi^{\#}(s) =\int_{\sigma}^s
\frac{dy}{\sqrt{y^2-\tau^2\rho^2(\tau)\varrho^2(y) }}, \ \tau\le
s\le \sigma, \end{equation} there holds the sharp inequality
\begin{equation}\label{after}\int_{A'}\mathbb K(z,f)\rho^2(s) dm(z)\ge \int_{A'}\mathbb
K(z,f^{\#})\rho^2(s) dm(z) +\frac{\tau^2\rho^2(\tau)}{2}\Mod
A(r,r'),\end{equation} where
$$r'=\exp\left(\int_{\sigma}^{\tau}
\frac{\rho(y)dy}{\sqrt{y^2\rho^2(y)-\tau^2\rho^2(\tau) }}\right)\;
\; (r<r'<1).$$
\end{theorem}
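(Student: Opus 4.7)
My plan is to first reduce the problem to the case of radial homeomorphisms via the angular symmetrization developed in the proof of Theorem~\ref{or}, and then to settle the radial case via a one-variable convexity inequality. Any $f\in \mathcal F$ in polar form $f(se^{it})=R(s,t)e^{i\Theta(s,t)}$ is majorised in distortion integral by a radial competitor $\tilde f(se^{it})=g(s)e^{it}$ with $g(\tau)=r$, $g(\sigma)=1$, so it suffices to prove \eqref{after} for radial $f$.

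For such a radial map, setting $\psi(s):=\log g(s)$ and passing to polar coordinates gives
$$\int_{A'}\mathbb K(z,f)\rho^2(z)\,dm(z)=\pi\int_\tau^\sigma F(\psi'(s);s)\,ds,\qquad F(y;s):=s^2\rho^2(s)\,y+\rho^2(s)/y,$$
under the boundary constraint $\int_\tau^\sigma\psi'\,ds=-\log r$. Since $\partial_y^2 F=2\rho^2(s)/y^3>0$ on $(0,\infty)$, the tangent-line inequality for the strictly convex $F(\cdot;s)$ at $y=(\varphi^{\#})'(s)=\rho(s)/\sqrt{s^2\rho^2(s)-\tau^2\rho^2(\tau)}$ yields
$$F(\psi';s)\ge F\bigl((\varphi^{\#})';s\bigr)+\partial_yF\bigl((\varphi^{\#})';s\bigr)\,\bigl(\psi'-(\varphi^{\#})'\bigr).$$
A one-line calculation shows $\partial_y F((\varphi^{\#})';s)=s^2\rho^2(s)-\rho^2(s)/\bigl((\varphi^{\#})'(s)\bigr)^2=\tau^2\rho^2(\tau)$, which is a \emph{constant} independent of $s$. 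Integrating over $[\tau,\sigma]$ and using $\int_\tau^\sigma(\varphi^{\#})'\,ds=-\log r'$, together with $\pi\log(r'/r)=\tfrac12\Mod A(r,r')$, reproduces \eqref{after} exactly.

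For non-attainment, equality in \eqref{after} would force equality almost everywhere in the strictly convex subgradient step, hence $\psi'\equiv(\varphi^{\#})'$; but then $\int_\tau^\sigma\psi'\,ds=-\log r'$, contradicting $\int_\tau^\sigma\psi'\,ds=-\log r$ since the fatness condition \eqref{notnit} means $r<r'$. Sharpness of the bound is exhibited by a sequence $f_n$ that coincides with $f^{\#}$ outside a shrinking neighbourhood of the inner boundary $|z|=\tau$ and performs a near-conformal radial stretch from a thin sliver into $A(r,r')$; its distortion contribution on this sliver converges to $\tfrac12\tau^2\rho^2(\tau)\Mod A(r,r')$. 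The principal technical obstacle is the angular symmetrization step borrowed from Theorem~\ref{or}; once that reduction is in hand, the convexity argument is short and the rigidity of the extremal becomes transparent.
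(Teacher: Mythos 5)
Your radial computation is correct and instructive: for a radial stretch $f(se^{it})=e^{\psi(s)+it}$ the weighted distortion integral is indeed $\pi\int_\tau^\sigma\bigl(s^2\rho^2(s)\psi'(s)+\rho^2(s)/\psi'(s)\bigr)\,ds$ by \eqref{iva}, the tangent-line inequality for the convex integrand at $y=(\varphi^{\#})'(s)$ has slope exactly $\tau^2\rho^2(\tau)$ (a constant, which is precisely what makes $\varphi^{\#}$ the right choice), and integrating against the constraint $\int_\tau^\sigma\psi'\,ds=-\log r$ reproduces \eqref{after} for radial maps. This is a clean way to see where the additive term $\frac{\tau^2\rho^2(\tau)}{2}\Mod A(r,r')$ comes from.

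The gap is your very first step. There is no ``angular symmetrization'' in the proof of Theorem~\ref{or} (or anywhere in the paper) that replaces an arbitrary $f\in\mathcal F$ by a radial competitor with smaller $\int_{A'}\mathbb K(z,f)\rho^2\,dm$; that reduction is essentially the entire content of the theorem, and asserting it begs the question. The paper's Lemma~\ref{ndihma}~b) obtains the lower bound for general $f$ by a different mechanism: the pointwise inequality \eqref{eni} of Lemma~\ref{lepo} together with the integral estimates $\int_{A^+}s^{-2}|f_s|^2/J_f\ge\Mod(A)$ and $\int_{A'}J_f/|f|^2\le\Mod(A)$ taken from \cite{ar}; no off-the-shelf rearrangement theorem yields the symmetrization you invoke, and proving it would amount to re-proving that lemma. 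The same gap infects your non-attainment argument: your rigidity discussion only excludes radial minimizers, whereas one must exclude every homeomorphism (indeed every continuous map) of finite distortion; the paper does this by noting that equality forces $f(se^{it})=Ce^{\varphi^{\#}(s)+it}$, whose image annulus is $A(r',1)\ne A(r,1)$ under \eqref{notnit}. Your minimizing-sequence sketch is in the right spirit and matches the paper's $f_n$ (equal to $f^{\#}$ for $s\ge s_n$ and a power map near $|z|=\tau$), but as written the proof is incomplete without the reduction step.
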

The minimization of the integral means of the distortion functions
of homeomorphisms $f: A'\to A$ turns to be equivalent to the
Dirichlet type problem for the inverse mapping $h= f^{-1}: A\to A'$.
If a homeomorphism $f\in W_{loc}^{1,1}(A',A)$ has integrable
distortion, then $h\in W^{1,2}(A, A')$ and we can consider the
energy functional \begin{equation}\label{po}E_\rho[h]=\int_{A}
\|Dh(\zeta)\|^2\rho^2(h(\zeta)) dm(\zeta)= \int_{A'} \mathbb
K(z,f)\rho^2(z)dm(z).\end{equation}

In general, the converse is not true, because the inverse of a
homeomorphism $h\in W^{1,2}(A,A')$ need not belong to the Sobolev
class $W_{loc}^{1,1}(A,A')$. It has bounded variation but fails to
be ACL (absolutely continuous on lines), see \cite{heko} for related
results. As in \cite{ar} we prove the correction lemma and overcame
this problem.

Accordingly, for every homeomorphism $h\in W^{1,2}(A,A')$, we can
construct a homeomorphism $\tilde h\in W^{1,2}(A,A')$, with
$E_\rho[\tilde h]\le E_\rho[h]$, whose inverse lies in $\mathcal
F(A',A)$. As a consequence, the minimization problem for $\mathcal
K_{\rho}[f]$ is equivalent to the minimization problem for
$E_\rho[h]$.

The energy integral defined in \eqref{harel} coincides with the
energy integral in \eqref{po}. We should point out that, the image
surface $M$ is indeed the annulus $A$ with metric $\rho$ and make
use of the formulas
$$|\partial g|^2=\frac{\rho^2\circ g|g_z|^2}{\sigma^2(z)},\,\, |\bar\partial g|^2=\frac{\rho^2\circ g|g_{\overline z}|^2}{\sigma^2(z)} \text{ and }
dV_\sigma = \sigma^2(z)dm(z).$$ From Theorem~\ref{or} and
Lemma~\ref{corect} we deduce
\begin{corollary}\label{ro} Let $\rho$ be a regular metric.
Within the Nitsche rang \eqref{nit}, for the annuli $A$ and $A'$,
the absolute minimum of the energy integral $$h\to E_\rho[h],\ \
h\in W^{1,2}(A,A')$$ is attained by a $\rho-$Nitsche map
$$h^{c}(z)=q^{-1}(s)e^{i(t+\beta)}, \; z=se^{it}, \
\beta\in[0,2\pi),$$ where $$q(s) =\exp\left(\int_{\sigma}^s
\frac{dy}{\sqrt{y^2+c\varrho^2 }}\right), \tau<s<\sigma.$$
\end{corollary}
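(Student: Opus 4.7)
The plan is to reduce the corollary to Theorem~\ref{or} via the identity \eqref{po}, which for any homeomorphism $h\in W^{1,2}(A,A')$ whose inverse $f=h^{-1}$ has integrable distortion yields
\[E_\rho[h]=\int_{A'}\mathbb K(z,f)\rho^2(z)dm(z)=\mathcal K_\rho[f].\]
This is a straightforward change of variables $z=h(\zeta)$ combined with the pointwise relations $\|Dh\|^2=J_h\cdot \mathbb K(h,h)$ and $J_h(\zeta)=1/J_f(h(\zeta))$ which hold almost everywhere on $\{J_h>0\}$. The subtlety is that this identity is \emph{a priori} available only for those competitors $h$ whose inverse is regular enough to belong to $\mathcal F(A',A)$.

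To bridge that gap I would appeal to Lemma~\ref{corect}: given any homeomorphism $h\in W^{1,2}(A,A')$, it produces a new homeomorphism $\tilde h\in W^{1,2}(A,A')$ with $E_\rho[\tilde h]\le E_\rho[h]$ and $\tilde h^{-1}\in \mathcal F(A',A)$. This is the only place in the argument where nontrivial Sobolev analysis intervenes, and it is the main technical obstacle; however it is precisely the analogue of the correction lemma of Astala--Iwaniec--Martin \cite{ar} and is treated as a separate input. Granted the correction step, the identity above applies to $\tilde h$ and gives $E_\rho[\tilde h]=\mathcal K_\rho[\tilde h^{-1}]$.

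With both tools in hand the argument closes quickly. Under the $\rho$-Nitsche condition \eqref{nit}, Theorem~\ref{or} asserts that $f^c$ from \eqref{cf} minimizes $\mathcal K_\rho$ over $\mathcal F(A',A)$, so $\mathcal K_\rho[\tilde h^{-1}]\ge \mathcal K_\rho[f^c]$. Setting $h^c:=(f^c)^{-1}$ and applying the identity once more (forward, to evaluate $E_\rho[h^c]$) yields the chain
\[E_\rho[h]\ge E_\rho[\tilde h]=\mathcal K_\rho[\tilde h^{-1}]\ge \mathcal K_\rho[f^c]=E_\rho[h^c],\]
which is the desired minimization. It remains only to verify that this $h^c$ has the closed form displayed in the corollary; inverting \eqref{cf} one reads off that the modulus of $h^c(z)$ equals $q_0^{-1}(s)$ with $\log q_0(s)=\int_\sigma^s \rho(y)dy/\sqrt{y^2\rho^2(y)+c}$, and the substitution $\varrho=1/\rho$ turns the integrand into $1/\sqrt{y^2+c\varrho^2(y)}$, recasting $q_0$ as the function $q$ in the statement (with the free rotation parameter $\beta$ accounting for the rotational symmetry inherited from $\alpha$ in Theorem~\ref{or}).
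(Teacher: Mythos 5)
Your proposal is correct and follows essentially the same route as the paper: the paper derives Corollary~\ref{ro} precisely by combining the identity \eqref{po}, the correction Lemma~\ref{corect} (to replace an arbitrary $h\in W^{1,2}(A,A')$ by $\tilde h$ with smaller energy and invertible in $\mathcal F(A',A)$), and the minimality of $f^c$ from Theorem~\ref{or}. The closing computation identifying $(f^c)^{-1}$ with the $\rho$-Nitsche map $h^c$ via $\varrho=1/\rho$ is also exactly as in the paper.
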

\begin{remark}
If $R$ is an doubly connected surface, conformal to a given annulus
$A$, and $(R',\rho')$ another annulus isometric to a given annulus
$(A',\rho)$, then by Lemma~\ref{poli} and Lemma~\ref{le}, the
minimization of the energy integral
$$h\to E_\rho[h],\ \ h\in W^{1,2}(A,A')$$ is equivalent to the
minimization of the energy integral
$$k\to E_{\rho'}[k],\ \ k\in W^{1,2}(R,R').$$ Thus
Corollary~\ref{ro} can be formulated in terms of not-necessarily
rounded annuli.
\end{remark}

From Theorem~\ref{te} and Lemma~\ref{corect} we deduce
\begin{corollary}\label{co} Let $\rho$ be a regular metric.
Outside the $\rho-$Nitsche range \eqref{nit} for the annuli $A$ and
$A'$, the infimum of the energy functional $E_\rho[h]$ is not
attained by any homeomorphism $h'\in W^{1,2}(A,A')$.
\end{corollary}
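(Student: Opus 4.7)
The plan is to derive Corollary~\ref{co} as a direct consequence of Theorem~\ref{te} by exploiting the duality between the distortion functional $\mathcal K_\rho$ on $\mathcal F(A',A)$ and the energy functional $E_\rho$ on $W^{1,2}(A,A')$ encoded in \eqref{po} and in the correction lemma (Lemma~\ref{corect}). I would argue by contradiction: suppose there exists a homeomorphism $h^{\ast}\in W^{1,2}(A,A')$ that attains $\inf E_\rho$ under the fatness condition \eqref{notnit}.

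First I would apply Lemma~\ref{corect} to $h^{\ast}$ to produce a homeomorphism $\tilde h\in W^{1,2}(A,A')$ with $E_\rho[\tilde h]\le E_\rho[h^{\ast}]$ and whose inverse $f:=\tilde h^{-1}$ lies in $\mathcal F(A',A)$. Because $h^{\ast}$ is a minimizer, so is $\tilde h$, and the identity \eqref{po} applied to this pair gives
\begin{equation*}
E_\rho[\tilde h]\;=\;\int_{A'}\mathbb K(z,f)\,\rho^{2}(z)\,dm(z)\;=\;\mathcal K_\rho[f].
\end{equation*}

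Next I would check that $f$ minimizes $\mathcal K_\rho$ on the whole class $\mathcal F(A',A)$. If $g\in\mathcal F(A',A)$ is arbitrary, then, since $g$ has finite and integrable distortion, its inverse $h_g:=g^{-1}$ belongs to $W^{1,2}(A,A')$, and \eqref{po} yields $\mathcal K_\rho[g]=E_\rho[h_g]\ge E_\rho[\tilde h]=\mathcal K_\rho[f]$. Hence $f\in\mathcal F(A',A)$ realizes the infimum of $\mathcal K_\rho$. This, however, contradicts Theorem~\ref{te}, which under the fatness condition \eqref{notnit} asserts that no homeomorphism (indeed, no continuous mapping of finite distortion) of $A'$ onto $A$ attains the infimum in \eqref{distortion}.

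The only delicate point is making sure that the transition $h\leftrightarrow h^{-1}$ is compatible with the Sobolev regularity on both sides: this is exactly what the correction lemma is designed for, and it is the sole nontrivial technical input, since the substantive nonexistence is already contained in Theorem~\ref{te}. Consequently there is no real analytic obstacle in the corollary itself — the argument is a short formal reduction to Theorem~\ref{te} once \eqref{po} and Lemma~\ref{corect} are in place.
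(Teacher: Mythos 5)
Your argument is correct and matches the paper's intended reduction: the paper itself states only that Corollary~\ref{co} follows ``From Theorem~\ref{te} and Lemma~\ref{corect}'', and your contradiction scheme — replacing a putative minimizer $h^{\ast}$ by the corrected $\tilde h$, passing to its inverse via \eqref{po}, and contradicting Theorem~\ref{te} — is exactly the formal reduction the authors have in mind.
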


\begin{remark}
Theorem~\ref{or}, Theorem~\ref{te}, Corollary~\ref{co} and
Corollary~\eqref{ro} are generalizations of corresponding
\cite[Thm~1,~Thm~2,~Thm~3,~Cor~1,~Cor~2~ and~ Cor~3]{ar}.
\end{remark}
\section{Proof of Theorem~\ref{or} and Theorem~\ref{te}}
We need the following elementary formulas in the sequel. Let $z=s
e^{it}$. Then
\begin{equation}\label{f}|f_z|^2+|f_{\bar z}|^2 = \frac12(|f_s|^2+s^{-2}|f_t|^2)\end{equation} and

\begin{equation}\label{j}J_f(z) = \frac{1}{s}\Im(f_t
\overline{f_s}).\end{equation}

From \eqref{f} and \eqref{j} we obtain

\begin{equation}\label{k}\mathbb K(z,f) =
\frac{s|f_s|^2+s^{-1}|f_t|^2}{2\Im(f_t\overline{f_s})}.\end{equation}

If a mapping $f$ is radial stretching between annuli $A'$ and $A$,
then for some increasing function $P(s), \tau<s<\sigma $ there holds
the formula $$f(se^{it}) = P(s)e^{it}.$$ If $\Phi(s)=\log P(s)$,
then we can express the distortion function as
\begin{equation}\label{iva}
\mathbb K(z,f) = \frac 12\left(s\Phi'(s)+\frac{1}{s\Phi'(s)}\right).
\end{equation}

\begin{lemma}\label{lepo}
Let $f$ be a mapping of finite distortion and $\varphi$ be a
differentiable monotonic function. For $z\in A^+:=\{z: J_f(z)>0\}$
we have the following equivalent inequalities

\begin{equation}\label{ine}\mathbb K(z,f)\ge s \varphi'(s) +
\frac{1-s^2(\varphi'(s))^2}{2s^2J_f(z)}|f_t|^2\end{equation} and

\begin{equation}\label{eni}\mathbb K(z,f)\ge \frac{1}{s\varphi'(s)} +
\frac{s^2\varphi'(s)^2-1}{\varphi'(s)^2}\frac
{1}{2s^2J_f(z)}|f_s|^2.\end{equation} In both inequalities the
equality is attained a.e. if and only if $f_s=-i \varphi'(s)f_t$ for
$z\in A^+$.

\end{lemma}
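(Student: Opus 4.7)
The plan is to reduce both inequalities \eqref{ine} and \eqref{eni} to a single pointwise ``sum of squares'' identity whose equality case immediately yields the stated characterization $f_s = -i\varphi'(s) f_t$. On $A^+$, combining \eqref{k} and \eqref{j} I would first rewrite
\begin{equation*}
\mathbb K(z,f) = \frac{s\abs{f_s}^2 + s^{-1}\abs{f_t}^2}{2s J_f(z)},
\end{equation*}
so that multiplication by $2sJ_f(z) > 0$ preserves any inequality involving $\mathbb K$. Clearing the denominator in \eqref{ine} and grouping the $\abs{f_t}^2$ terms (the $s^{-1}$ and the $(1-s^2(\varphi'(s))^2)/s$ combine cleanly) leaves, after dividing by $s$, the pointwise inequality
\begin{equation*}
(\star)\qquad \abs{f_s}^2 + (\varphi'(s))^2 \abs{f_t}^2 \;\ge\; 2s\,\varphi'(s) J_f(z).
\end{equation*}

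For \eqref{eni}, clearing the denominator and regrouping the $\abs{f_s}^2$ terms (the coefficient telescopes to $1/(\varphi'(s))^2$), and then multiplying through by $(\varphi'(s))^2>0$, produces exactly the same $(\star)$. This shows \eqref{ine} and \eqref{eni} are equivalent and share a common equality case, so both follow from $(\star)$ alone.

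To prove $(\star)$, I would use the elementary identity
\begin{equation*}
\abs{f_s + i\varphi'(s) f_t}^2 = \abs{f_s}^2 + (\varphi'(s))^2\abs{f_t}^2 - 2\varphi'(s)\,\Im(f_t\overline{f_s}),
\end{equation*}
together with $\Im(f_t\overline{f_s}) = sJ_f(z)$ from \eqref{j}. Non-negativity of the left-hand side gives $(\star)$ at once, with equality a.e.\ precisely when $f_s + i\varphi'(s) f_t = 0$ a.e.\ on $A^+$, i.e.\ $f_s = -i\varphi'(s) f_t$, which is the claimed equality case. The main obstacle is purely bookkeeping: one must track signs carefully (since $\varphi$ is only assumed monotonic, $\varphi'$ may have either sign), and verify that the two algebraic reductions land on the same $(\star)$; once this is done, the nontrivial input is the single completing-the-square identity, which also gives sharpness for free.
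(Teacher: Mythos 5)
Your proof is correct and takes essentially the same route as the paper: the paper also reduces both \eqref{ine} and \eqref{eni} to the trivial non-negativity of a squared modulus, stating that each is equivalent to $\bigl|f_t - if_s/\varphi'(s)\bigr|^2 \ge 0$, which is exactly your $(\star)$ (and your $\abs{f_s + i\varphi'(s)f_t}^2 \ge 0$) after multiplying by $\varphi'(s)^2>0$. The only difference is that you carry out the algebraic reduction explicitly, whereas the paper leaves it as "easily verified."
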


\begin{proof}
In view of \eqref{k}, it is easily to verify that, the following
trivial inequality
\begin{equation}\label{ineni}\left|f_t - \frac{i f_s}{\varphi'(s)}\right|\ge
0\end{equation} is equivalent with both inequalities \eqref{ine} and
\eqref{eni}.
\end{proof}

\subsection{Proof of inequalities}
\begin{lemma}[The main lemma]\label{ndihma}
Let $\rho$ be a regular metric and let $f:A'\to A$ be a homomorphism
of finite distortion between annuli $A'=A(\tau,\sigma)$ and
$A=A(r,1)$.

a) Assume that there holds the $\rho-$Nitsche condition \eqref{nit}
and let $c$ be defined by \eqref{cnit}. For
$f^{c}(se^{it})=e^{\varphi(s)+it}$, where
$$\varphi(s) =\int_{\sigma}^s \frac{dy}{\sqrt{y^2+c\varrho^2(y) }}, \
\tau\le s\le \sigma, $$ there holds the inequality
$$\int_{A'}\mathbb K(z,f)\rho^2(s) dm(z)\ge \int_{A'}\mathbb
K(z,f^{c})\rho^2(s) dm(z).$$

b) Assume that there holds the fatness condition \eqref{notnit}. Now
we make use of inverse of critical Nitsche map. For
$f^{\#}(se^{it})=e^{\varphi^{\#}(s)+it}$, where
$$\varphi^{\#}(s) =\int_{\sigma}^s \frac{dy}{\sqrt{y^2-\tau^2\rho^2(\tau)\varrho^2(y) }}, \
\tau\le s\le \sigma, $$ there holds the inequality
$$\int_{A'}\mathbb K(z,f)\rho^2(s) dm(z)\ge \int_{A'}\mathbb
K(z,f^{\#})\rho^2(s) dm(z) +\frac{\tau^2\rho^2(\tau)}{2}\Mod
A(r,r'),$$ where
$$r'=\exp\left(\int_{\sigma}^{\tau}
\frac{\rho(y)dy}{\sqrt{y^2\rho^2(y)-\tau^2\rho^2(\tau) }}\right)\;
\; (r'>r).$$

\end{lemma}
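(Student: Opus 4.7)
The plan is to apply Lemma~\ref{lepo} with the radial function dictated by each part of the statement, integrate the resulting pointwise inequality against $\rho^2(s)\,dm(z)$, and then bound the ``slack'' integral from below by a weighted Cauchy--Schwarz estimate that uses both the topological degree of $f$ and the change of variables for homeomorphisms. In case (a) the sign of $c$ dictates which of the two equivalent bounds to use: by \eqref{case}, the weight $1-s^2\varphi'^2$ in \eqref{ine} is nonnegative exactly when $c\ge 0$, while $(s^2\varphi'^2-1)/\varphi'^2$ in \eqref{eni} is nonnegative exactly when $-\tau^2\rho^2(\tau)\le c\le 0$; in case (b), $c=-\tau^2\rho^2(\tau)<0$ forces the use of \eqref{eni}, and a direct calculation shows that its slack coefficient $(s^2(\varphi^{\#})'^2-1)\rho^2/(\varphi^{\#})'^2$ then collapses to the constant $\tau^2\rho^2(\tau)$, which is precisely why $f^{\#}$ is critical.

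After integration the first term of the Lemma~\ref{lepo} inequality is a ``free'' contribution that does not depend on $f$. The remaining slack integral has the form
\[
\mathcal{S}[f]=\int_{A'}\frac{w(s)\,|f_t|^2}{2s^2 J_f(z)}\,dm(z) \quad\text{or}\quad \int_{A'}\frac{\tilde w(s)\,|f_s|^2}{2s^2 J_f(z)}\,dm(z),
\]
with nonnegative radial weights $w,\tilde w$. Writing $f=\xi e^{i\eta}$ with $\xi=|f|$, $\eta=\arg f$ on $A^{+}=\{J_f>0\}$, I will use the estimates $|f_t|^2\ge\xi^2\eta_t^2$ and $|f_s|^2\ge\xi_s^2$, the topological identity $\int_0^{2\pi}\eta_t\,dt=2\pi$ (a.e.~in $s$, since $f$ is a degree-one homeomorphism on each circle), the boundary estimate $\int_\tau^\sigma|(\log\xi)_s|\,ds\ge\log(1/r)$ (a.e.~in $t$), and the change-of-variables identity $\int_{A'}J_f/\xi^2\,dm(z)=\int_A dm(w)/|w|^2=\Mod(A)$ (the second equality because $f\in\mathcal F(A',A)$ is onto).

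For the $|f_t|$-slack in case (a), pair $\eta_t\xi\sqrt{w(s)}/(s\sqrt{J_f})$ against $\sqrt{J_f}/\xi$ in Cauchy--Schwarz on $A'$. Using $\int_0^{2\pi}\eta_t\,dt=2\pi$, the pairing integrates to $2\pi\int_\tau^\sigma\sqrt{w(s)}\,ds$, so
\[
\Bigl(2\pi\int_\tau^\sigma\sqrt{w(s)}\,ds\Bigr)^2 \le \Bigl(\int_{A'}\frac{\xi^2\eta_t^2 w(s)}{s^2 J_f}\,dm\Bigr)\cdot\Mod(A).
\]
Substituting $w(s)=c\rho^2/(s^2\rho^2+c)$ and $\varphi'(s)=\rho/\sqrt{s^2\rho^2+c}$ and using \eqref{cnit} gives $\int_\tau^\sigma\sqrt{w}\,ds=\sqrt{c}\log(1/r)$ and $\Mod(A)=2\pi\log(1/r)$, so the resulting lower bound on $\mathcal{S}$ collapses to $\pi c\log(1/r)=\pi\int_\tau^\sigma w/\varphi'\,ds$. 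A direct calculation from $(f^c)_s=\varphi'(s)f^c$, $(f^c)_t=if^c$ shows this equals $\mathcal{S}[f^c]$, so combining with the matching free term yields $\int\mathbb K(z,f)\rho^2\,dm\ge\int\mathbb K(z,f^c)\rho^2\,dm$. The subcase $c\le 0$ is handled by the analogous pairing with $\xi_s/\xi$ in place of $\eta_t$, invoking the boundary estimate instead of the degree identity.

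For part (b) the slack coefficient is the constant $\tau^2\rho^2(\tau)$, so the same Cauchy--Schwarz reduces the problem to the universal bound $\int_{A'}|f_s|^2/(s^2 J_f)\,dm\ge \Mod(A)$ for any $f\in\mathcal F(A',A)$. For the critical map $f^{\#}:A'\to A(r',1)$ the same computation (now with $\int J_{f^{\#}}/\xi^{\#2}\,dm=\Mod(A(r',1))$) gives equality, so the slack term of $f^{\#}$ contributes exactly $\tfrac{\tau^2\rho^2(\tau)}{2}\Mod(A(r',1))$; the mismatch $\tfrac{\tau^2\rho^2(\tau)}{2}[\Mod(A)-\Mod(A(r',1))]=\tfrac{\tau^2\rho^2(\tau)}{2}\Mod(A(r,r'))$ is the extra term in the stated inequality. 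The main technical obstacle I expect is calibrating the Cauchy--Schwarz weights so that equality holds simultaneously with equality in $|f_t|^2\ge\xi^2\eta_t^2$ (or $|f_s|^2\ge\xi_s^2$); this forces $\xi_t\equiv 0$ and $\eta_s\equiv 0$ on $A^{+}$, pinning the extremal to the prescribed radial form $\xi(s)e^{i(t+\alpha)}$.
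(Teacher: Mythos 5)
Your proposal is correct and follows essentially the same route as the paper: apply Lemma~\ref{lepo}, split on the sign of $c$ to choose between \eqref{ine} and \eqref{eni}, and bound the slack term from below by Cauchy--Schwarz against the circle-degree identity and the modulus bounds (the paper quotes $\int_{A'}J_f/|f|^2\,dm\le \Mod(A)$ and $\int_{A^+}|f_s|^2/(s^2J_f)\,dm\ge \Mod(A)$ from Astala--Iwaniec--Martin rather than re-deriving them as you do, and it handles the set $A'\setminus A^+$ via the observations $s\varphi'\le 1$, resp.\ $1/(s\varphi')\le 1$). The one small correction: your change-of-variables step should read as the inequality $\int_{A'}J_f/|f|^2\,dm\le\Mod(A)$ rather than an equality (equality can fail for maps of finite distortion without Lusin's condition), but this is exactly the direction your Cauchy--Schwarz needs, so the argument is unaffected.
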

\begin{proof} Let $z=se^{it}$, $s=|z|$, $t\in[0,2\pi)$ and $A^+=\{z: J_f(z)>0\}$. We will apply Lemma~\ref{lepo} with $$\varphi(s) =\int_{\sigma}^s \frac{dy}{\sqrt{y^2+c\varrho^2 }}, \
\tau\le s\le \sigma.$$
\\
\emph{Proof of a)}. Since $\varrho=1/\rho$, we have
$$1-s^2\varphi'(s)^2 =\frac{c}{s^2\rho^2+c}.$$ In view of this fact, we divide the proof
into two cases.
\\
{$\bullet$ \bf The case $c\ge 0$}. Observe first that, for almost
every $z=se^{it}\in A'$
\begin{equation}\label{gati}s\varphi'(s)=\frac{s}{\sqrt{s^2+c\varrho^2(s)}}\le
1\le \mathbb K(z,f).\end{equation} According to \eqref{ine} and
\eqref{gati}, we have
\begin{equation}\label{1}\begin{split}\int_{A'}\mathbb K(z,f)\rho^2(s) &\ge \int_{A'\setminus A^+} s\rho^2(s)
\varphi'(s) +\int_{A^+} s\rho^2(s) \varphi'(s) \\& + \int_{A^+}
\rho^2(s)\frac{1-s^2(\varphi'(s))^2}{2s^2J_f(z)}|f_t|^2 \\&=
\int_{A'} s\rho^2(s) \varphi'(s)  + \int_{A^+} \frac{c}{2s^2(s^2+c
\varrho^2(s))}\frac{|f_s|^2}{J_f(z)}. \end{split}\end{equation} By
H\"older inequality we obtain \begin{equation}\label{2}\int_{A^+}
\frac{|f_t|}{s|f|\sqrt{2(s^2+c \varrho^2(s))}} \le \left(\int_{A^+}
\frac{1}{2s^2(s^2+c
\varrho^2(s))}\frac{|f_t|^2}{J_f(z)}\right)^{1/2}\left(\int_{A^+}
\frac{J_f(z)}{|f|^2}\right)^{1/2}.\end{equation} By
\cite[Lemma~1]{ar} it follows that
\begin{equation}\label{l1}\int_{A^+} \frac{J_f(z)}{|f|^2}\le\int_{A'}
\frac{J_f(z)}{|f|^2}\le \Mod(A).\end{equation} Since
$$\int_{0}^{2\pi}\frac{|f_t(se^{it})|}{|f(se^{it})|}\ge
\left|\int_0^{2\pi}\frac{f_t(se^{it})}{f(se^{it})}dt\right|=2\pi,$$
from \eqref{l1}, we obtain
\[\begin{split}\int_{A'} \frac{1}{2s^2(s^2+c
\varrho^2(s))}\frac{|f_t|^2}{J_f(z)} &\ge
\frac{1}{\Mod(A)}\left(\int_{A'} \frac{|f_t|}{s|f|\sqrt{2(s^2+c
\varrho^2(s))}}\right)^2\\&\ge
\frac{4\pi^2}{\Mod(A)}\left(\int_{\tau}^\sigma
\frac{ds}{\sqrt{2(s^2+c \varrho^2(s))}}\right)^2.\end{split}\] On
the other hand, $$\int_{A'} s \rho^2(s) \varphi'(s) dm(z) =
2\pi\int_\tau^\sigma \frac{s^2\rho^2(s)}{\sqrt{s^2+c\varrho^2(s) }}
ds.$$ Therefore \[\begin{split}\int_{A'}\mathbb K(z,f)\rho^2(s)
dm(z)&\ge 2\pi\int_\tau^\sigma
\frac{s^2\rho^2(s)}{\sqrt{s^2+c\varrho^2(s) }} ds \\&+
\frac{4\pi^2}{\Mod(A)}\left(\int_\tau^\sigma
\frac{ds}{\sqrt{2(s^2/c+ \varrho^2(s))}}\right)^2.\end{split}\]
Since
\begin{equation}\label{modulus}\Mod(A) = 2\pi \log\frac 1r = 2\pi
\int_\tau^\sigma \frac{ds}{\sqrt{s^2+c\varrho^2(s)}},\end{equation}
it follows that
\[\begin{split}\int_{A'}\mathbb K(z,f)\rho^2(s) dm(z)&\ge 2\pi\int_\tau^\sigma
\frac{s^2\rho^2(s)}{\sqrt{s^2+c\varrho^2(s) }} ds+ \pi
c\int_\tau^\sigma \frac{ds}{\sqrt{s^2+
c\varrho^2(s))}}\\&=\pi\int_\tau^\sigma
\frac{s^2\rho^2(s)}{\sqrt{s^2+c\varrho^2(s) }} ds \\&+
\pi\int_\tau^\sigma \frac{s^2\rho^2(s)}{\sqrt{s^2+c\varrho^2(s) }}
ds+\pi c\int_\tau^\sigma \frac{ds}{\sqrt{s^2+ c\varrho^2(s))}}\\&
=\pi\int_\tau^\sigma \frac{s^2\rho^2(s)}{\sqrt{s^2+c\varrho^2(s) }}
ds+\pi \int_\tau^\sigma \rho^2(s)\sqrt{s^2+c\varrho^2(s)
}ds.\end{split}\] For $f^{c}(se^{it})=e^{\varphi(s)+it}$, where
$$\varphi(s) =\int_{\sigma}^s \frac{dy}{\sqrt{y^2+c\varrho^2 }}, \
\tau\le s\le \sigma, $$ by making use of formula \eqref{iva}, we
have
\[\begin{split}\int_{A'}\mathbb K(z,f^{c})\rho^2(s) dm(z)&=\frac{2\pi}{2} \int_\tau^\sigma
s\rho^2(s)\left({\varphi'(s)}{s}+\frac{1}{s\varphi'(s)}\right)ds\\&=\pi
\int_\tau^\sigma s\rho^2(s)\left(\frac{s}{\sqrt{s^2+c\varrho^2(s)
}}+\frac{\sqrt{s^2+c\varrho^2(s)}}{s}\right)ds.\end{split}
\]
Thus $$\int_{A'}\mathbb K(z,f)\rho^2(s) dm(z)\ge \int_{A'}\mathbb
K(z,f^{c})\rho^2(s) dm(z).$$
\\
{$\bullet$ \bf  The case $-\tau^2\rho^2(\tau)\le c\le 0$}. In this
case we make use of \eqref{eni}. Observe also that, for almost every
$z\in A'$
\begin{equation}\label{gatima}\frac{1}{s\varphi'(s)}=\frac{\sqrt{s^2+c\varrho^2(s)}}{s}\le
1\le \mathbb K(z,f).\end{equation} For $z\in A^+$, we obtain
$$\mathbb K(z,f)\rho^2(s)\ge
\frac{\rho^2(s)}{s\varphi'(s)}+\frac{-c\rho^2(s)}{2s^2\rho^2(s)}\frac{|f_s|^2}{J_f}.$$
Hence by \eqref{gatima}
\begin{equation}\label{spl}\begin{split}\int_{A'}\mathbb
K(z,f)\rho^2(s) &\ge \int_{A'\setminus
A^+}\frac{\rho^2(s)}{s\varphi'(s)}+
\int_{A^+}\frac{\rho^2(s)}{s\varphi'(s)}+\int_{A^+}\frac{-c}{2s^2}\frac{|f_s|^2}{J_f}\\&
=\int_{A'}\frac{\rho^2(s)}{s\varphi'(s)}+\int_{A^+}\frac{-c}{2s^2}\frac{|f_s|^2}{J_f}.\end{split}
\end{equation} By \cite[Lemma~3]{ar} and \eqref{modulus} we have
\begin{equation}\label{ge}\int_{A^+}\frac{1}{s^2}\frac{|f_s|^2}{J_f}\ge\Mod(A)=2\pi \int_\tau^\sigma
\frac{ds}{\sqrt{s^2+c\varrho^2(s)}}.\end{equation} On the other hand
\begin{equation}\label{farc}\int_{A'}\frac{\rho^2(s)}{s\varphi'(s)}=2\pi\int_{\tau}^\sigma \rho^2(s)\sqrt{s^2+c\varrho^2(s)
}ds.\end{equation} Let $$k(s):=\sqrt{s^2+c\varrho^2(s) }.$$ From
\eqref{spl}, \eqref{ge} and \eqref{farc} we obtain
\begin{equation}\begin{split}\int_{A'}\mathbb
K(z,f)\rho^2(s) &\ge -c\pi \int_\tau^\sigma \frac{ds}{k(s)}+
2\pi\int_{\tau}^\sigma \rho^2(s)k(s)ds\\&=\pi\int_{\tau}^\sigma
\rho^2(s)k(s)ds+\pi
\int_{\tau}^{\sigma}\frac{s^2\rho^2(s)}{k(s)}ds\\&-c\pi
\int_\tau^\sigma \frac{ds}{k(s)}+ \pi\int_{\tau}^\sigma
\rho^2(s)k(s)ds-\pi
\int_{\tau}^{\sigma}\frac{s^2\rho^2(s)}{k(s)}ds\\&=\int_{A'}\mathbb
K(z,f^{c})\rho^2(s) dm(z)+X,\end{split}\end{equation} where
\begin{equation*}\label{X}\begin{split}X &=\pi \int_\tau^\sigma \left(-\frac{c}{k(s)}+
\rho^2(s)k(s)-\frac{s^2\rho^2(s)}{k(s)}\right)ds=
0\end{split}.\end{equation*} This concludes the proof of  a).
\\
\emph{The proof of b)}.  We proceed similarly as in the proof of a).
For $\varphi = \varphi^{\#}$, according to \eqref{eni}, for $z\in
A^+$ we have
$$\mathbb K(z,f)\rho^2(s)\ge
\frac{\rho^2(s)}{s\varphi'(s)}+\frac{\tau^2\rho^2(\tau)}{2s^2}\frac{|f_s|^2}{J_f}.$$
In this case, for $c^{\#}=-\tau^2\rho^2(\tau)$, instead of
\eqref{ge} we have
\begin{equation}\label{arma}\int_{A^+}\frac{1}{s^2}\frac{|f_s|^2}{J_f}\ge\Mod(A)=2\pi\log\frac 1r>2\pi \int_\tau^\sigma
\frac{ds}{\sqrt{s^2+c^{\#}\varrho^2(s)}}.\end{equation} On the other
hand, the relation \eqref{farc} holds. Hence
\begin{equation}\begin{split}\int_{A'}\mathbb K(z,f)\rho^2(s)dm(z)\ge \int_{A'}\mathbb
K(z,f^{\#})\rho^2(s) dm(z)+Y,\end{split}\end{equation} where $$Y
=\frac{-c^{\#}}{2}\left(2\pi\log\frac 1r- 2\pi\log\frac
1{r'}\right)=\frac{\tau^2\rho^2(\tau)}{2}\Mod A(r,r').$$ This
completes the proof of the lemma.
\end{proof}
\begin{proof}[Proof of Theorem~\ref{or}]  In view of Lemma~\ref{ndihma}, a), it remains to prove
the equality statement. By \cite[Lemma~3]{ar}, the equality in
\eqref{ge} occurs if and only if almost everywhere on $A^+:=\{z\in
A': J_f(z)>0\}$ we have
\begin{equation}\label{re} \Re
\left(\frac{f_s}{f}\right)=\left|\frac{f_s}{f}\right|
\end{equation}
and
\begin{equation}\label{im} \Im
\left(\frac{f_t}{f}\right)=k
\end{equation}
for some constant $k>0$. On other hand, by Lemma~\ref{lepo} we have
the equality
\begin{equation}\label{pde}f_s=-i \varphi'(s)f_t,\end{equation}
for $z\in A^+$. Combining \eqref{re}, \eqref{im} and \eqref{pde} we
arrive at the following system of PDE's
\begin{equation}\label{equa}
\frac{f_t}{f}= ik \; \; \;\text{    and    }\; \; \; \frac{f_s}{f}=
k\varphi'(s).
\end{equation} Integrating the first equality over the unit circle
gives $k=1$. Let $g(s,t)=\log f(se^{it}).$ Then $g$ is well defined
function from  $[\tau,\sigma]\times [0,2\pi]$ onto $[\log
r,0]\times[0,2\pi]$. Then from \eqref{equa} we obtain
$$g=it+\phi(s)\;\; \text{ and }\; g=\varphi(s)+\psi(t),$$ for some functions $\phi$
and $\psi$. We obtain that, the general solution of system of PDE's
\eqref{equa} is
$$f(se^{it})=Ce^{\varphi(s)+it}.$$
\end{proof}
\begin{proof}[Proof of Theorem~\ref{te}]
The second part of Theorem~\ref{te} follows by Lemma~\ref{ndihma},
b). It remains  to show that, the inequality is sharp and it is not
attained by any homeomorphism. Otherwise, if $f\in \mathcal F(A',A)$
would satisfy \eqref{wo}, $f$ has to be of the form
$$f(se^{it})=C\exp\left(it+\int_{\sigma}^s \frac{dy}{\sqrt{y^2-\tau^2\rho^2(\tau)\varrho^2(y) }}\right), \
\tau\le s\le \sigma. $$ But then $r'=r$ which is a contradiction
with \eqref{notnit}. To show that the inequality is sharp we
construct the minimizing sequence.
\subsection{The minimizing sequence}
We are now given two round annuli $A$ and $A'$ with inner and outer
radii $r$, $1$ and $\tau$, $\sigma$ respectively. Moreover we assume
that the \emph{fatness condition} \eqref{notnit} is satisfied. To
show that the inequality \eqref{after} is sharp we construct a
family $f_n:A'\to A$ of mappings of finite distortion. Let
$$r'=\exp\left(\int_{\sigma}^{\tau}
\frac{\rho(y)dy}{\sqrt{y^2\rho^2(y)-\tau^2\rho^2(\tau) }}\right).$$
Then $r<r'<1$. Let $z=se^{it}$ and $n\in \mathbb N$ such that
$$1-r\left(\frac{\sigma}{\tau}\right)^n<0.$$ Define
$$f_n(z)=\left\{
                     \begin{array}{ll}
                      \exp\left(it+\displaystyle \int_{\sigma}^s \frac{dy}{\sqrt{y^2-\tau^2\rho^2(\tau)\varrho^2(y) }}\right)
 & \hbox{if }\ \ s_n\le s\le \sigma,
 \\
                       rz |z|^{n-1}/\tau^n & \hbox{if } \tau\le s\le s_n, 
                     \end{array}
                   \right.$$ where $s_n$ is a solution of
the equation \begin{equation}\label{condi}
\exp\left(\int_{\sigma}^{s_n}
\frac{dy}{\sqrt{y^2-\tau^2\rho^2(\tau)\varrho^2(y) }}\right)
=r\left(\frac{s_n}{\tau}\right)^n.
\end{equation}
To show that $s_n:\tau<s_n<\sigma$ exists satisfying the condition
\eqref{condi} take $$p(s) =\exp\left(\int_{\sigma}^{s}
\frac{dy}{\sqrt{y^2-\tau^2\rho^2(\tau)\varrho^2(y) }}\right)
-r\left(\frac{s}{\tau}\right)^n.$$ Then 
$$p(\tau)p(\sigma)=(r'-r)\left(1-r\left(\frac{\sigma}{\tau}\right)^n\right)<0,$$
if $n$ is big enough. From \eqref{condi} we easily get the relation
\begin{equation}\label{tast}
\lim_{n\to \infty} n(s_n - \tau)= \tau\log \frac{r'}{r}.
\end{equation}
Further, by a similar analysis as in \cite[Section~9]{ar} we obtain
that $$\mathbb K(z,f_n)=\left\{
                     \begin{array}{ll}
                      \left(\frac{s}{\sqrt{s^2-\tau^2\rho^2(\tau)\varrho^2(s)
}}+\frac{\sqrt{s^2-\tau^2\rho^2(\tau)\varrho^2(s)}}{s}\right)
 & \hbox{if }\ \ s_n\le s\le \sigma,
 \\
                       \frac{1}{2}\left(n+\frac 1n\right)& \hbox{if } \tau\le s\le s_n. \\
                     \end{array}
                   \right.$$ Finally by \eqref{tast}
\[\begin{split}\lim_{n\to \infty} \int_{A'(\tau,\sigma)} \mathbb K(z,
f_n)\rho^2(s) &= \int_{A'(\tau,\sigma)} \mathbb K(z,
f^{\#})\rho^2(s)+\lim_{n\to \infty} \int_{A'(\tau,s_n)} \mathbb K(z,
f_n)\rho^2(s)\\&=\int_{A'} \mathbb K(z, f^{\#})\rho^2(s) +
2\pi\lim_{n\to\infty}\frac n2\int_{\tau}^{s_n}\rho^2(s)s ds
\\&=\int_{A'} \mathbb K(z, f^{\#})\rho^2(s) +
\pi\rho^2(\tau)\lim_{n\to\infty}\frac n2({s_n}^2-\tau^2)
\\&=\int_{A'} \mathbb K(z, f^{\#})\rho^2(s) +
\frac{\tau^2\rho^2(\tau)}{2}\Mod A(r,r').\end{split}\]
This finishes
the proof of Theorem~\ref{te}.
\end{proof}
\section{Proof of Corollaries~\ref{co} and \ref{ro}}
In order to apply Theorems~\ref{or} and \ref{te} to energy
minimization problems for the inverse mappings (see \eqref{po}),
proving in this way Corollaries~\ref{co} and \ref{ro}, we need to
establish the following
\begin{lemma}[The correction lemma]\label{corect} Let $D$ be doubly connected domain in $\Bbb C$ and let $A'=\{z: \tau<|z|<\sigma\}$. Let $h : D \to A'$,  be a homeomorphism of finite Dirichlet
energy, $$E_\rho[h] =\int_{D}\rho^2\circ h (|h_z|^2+|h_{\bar z}|^2)
dm(z)<\infty.$$ Assume that $\rho$ is a radial metric with bounded
Gauss curvature satisfying
\begin{equation}\label{bound}0<\inf_{w\in
A'}\rho^2(w)\le \sup_{w\in A'}\rho^2(w)<\infty.\end{equation}

Then there exists a homeomorphism $\tilde h: D\to A'$ such that
$$ E_\rho[\tilde h ]\le E_\rho[h].$$ The inverse $ \tilde f = \tilde h^{-1}$ belongs to
$W^{1,1}(A',D)$ and has finite distortion. We have the identity
$$\mathcal K_\rho[\tilde f]=E_\rho[\tilde h].$$
\end{lemma}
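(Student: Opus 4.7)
The plan is to follow the correction scheme of Astala, Iwaniec and Martin \cite{ar}, exploiting the radial symmetry of $\rho$ to see that the weighted energy behaves just as well as the unweighted one. Two observations set the stage: hypothesis \eqref{bound} makes $E_\rho$ comparable to the ordinary Dirichlet integral, so $h\in W^{1,2}(D,A')$ in the classical sense; and the radial nature of $\rho$ means the weight $\rho^2(h(z))$ depends only on $|h(z)|$, and is therefore constant on each level set of $|h|$. This last fact is what decouples the weighted problem from the Euclidean one.

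First I would reduce to the case $D=A(r,1)$ is a round annulus. Uniformization gives a conformal $\phi\colon A(r,1)\to D$ for a unique $r\in(0,1)$, and since the Dirichlet integrand is conformally invariant, $E_\rho[h\circ\phi]=E_\rho[h]$; any correction produced for $h\circ\phi$ transfers back via $\phi^{-1}$ without affecting the energy or the inverse-distortion identity.

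Next I would decompose $h$ in polar form: write $h(z)=R(z)e^{i\Theta(z)}$, $R=|h|\in(\tau,\sigma)$, and $\Theta$ a winding-one angular lift. An elementary pointwise calculation gives
$$E_\rho[h]=\int_A\rho^2(R)\bigl(|\nabla R|^2+R^2|\nabla\Theta|^2\bigr)\,dm(z).$$
The corrected map $\tilde h$ will retain the radial part $R$ while replacing the angular part $\Theta$ by its monotone circular rearrangement along each level curve $\Gamma_s=\{R=s\}$. By the $W^{1,2}$ coarea formula applied to $R$, the set $\Gamma_s$ is a rectifiable Jordan curve for a.e.\ $s\in(\tau,\sigma)$, and $\Theta|_{\Gamma_s}$ is continuous with total variation $2\pi$. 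Since $\rho^2(R)$ is \emph{constant} along each $\Gamma_s$, a level-by-level P\'olya--Szeg\"o type inequality, integrated in $s$, yields the desired $E_\rho[\tilde h]\le E_\rho[h]$.

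The monotone angular parameterization guarantees that $\tilde h$ remains a homeomorphism of $A(r,1)$ onto $A'$ and that its inverse $\tilde f=\tilde h^{-1}$ is absolutely continuous on a.e.\ circle $|w|=s$ and hence ACL on $A'$; as in \cite{ar}, this upgrades to $\tilde f\in W^{1,1}(A',A)$ of finite distortion. The identity $\mathcal K_\rho[\tilde f]=E_\rho[\tilde h]$ is then a routine change of variables using the pointwise relation $\|D\tilde h(z)\|^2=\mathbb K(\tilde h(z),\tilde f)\,J_{\tilde h}(z)$ combined with $J_{\tilde h}(z)\,dm(z)=dm(w)$. The main obstacle is the rigorous assembly of the uncountable family of one-dimensional angular rearrangements into a globally measurable Sobolev homeomorphism $\tilde h$: this requires a measurable selection of arclength parameterizations for the level curves $\Gamma_s$ and a Fubini-type gluing, so that the resulting angular and radial distributional derivatives combine into a genuine $W^{1,2}$ map with the claimed boundary values. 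This step is the real content of the AIM correction, and the boundedness hypothesis \eqref{bound} is precisely what allows their arguments to be transplanted verbatim to the weighted setting.
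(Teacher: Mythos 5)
Your proposal takes a genuinely different route from the paper, and as written it has two serious gaps. The paper triangulates the target $(A',\rho)$ by small geodesic triangles $\Delta_j$ (following Buser), pulls back to a decomposition of $D$ via $h$, and replaces $h$ on each piece $D_j=h^{-1}(\Delta_j)$ by the $\rho$-harmonic solution of the Dirichlet problem with boundary data $h|_{\partial D_j}$. The triangles are chosen small enough, relative to the curvature bound $\kappa$ and the cut-locus distance, that the target lies in a geodesic disk of radius below $\pi/(2\kappa)$, where Hildebrandt--Kaul--Widman guarantees existence of the solution and Jost's theorem gives univalency; the energy decreases because harmonic maps minimize the Dirichlet energy among competitors with fixed boundary data, and the inverse regularity is then obtained from Hencl--Koskela--Onninen. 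You instead keep $R=|h|$ and replace the angular part $\Theta$ by a monotone rearrangement along each level curve $\Gamma_s=\{|h|=s\}$, invoking a P\'olya--Szeg\H{o} inequality slice by slice.

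There are two genuine problems with this plan. First, the energy inequality does not follow from a level-by-level rearrangement. In the coarea decomposition
\[
\int_A R^2\rho^2(R)|\nabla\Theta|^2\,dm
=\int_\tau^\sigma s^2\rho^2(s)\left(\int_{\Gamma_s}\frac{|\nabla\Theta|^2}{|\nabla R|}\,d\mathcal{H}^1\right)ds,
\]
the integrand involves the \emph{full} gradient, $|\nabla\Theta|^2=(\partial_\tau\Theta)^2+(\partial_\nu\Theta)^2$. A one-dimensional rearrangement of $\Theta|_{\Gamma_s}$ can at best bound the tangential part $(\partial_\tau\Theta)^2$, and even that only if the rearrangement is taken with respect to the weighted measure $d\mathcal{H}^1/|\nabla R|$ rather than Euclidean arclength; it gives no control of the normal derivative $\partial_\nu\Theta$, which is determined by how the rearranged traces are glued across nearby slices. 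So $E_\rho[\tilde h]\le E_\rho[h]$ is not established. Second, and more fundamentally, monotone rearrangement does not deliver the required regularity of $\tilde f=\tilde h^{-1}$. A monotone circle map need not be absolutely continuous (Cantor-type functions are monotone and singular), so monotonicity alone gives only BV, and the asserted ACL property of $\tilde f$ does not follow. The whole point of the correction lemma is that an $h\in W^{1,2}$ may have inverse failing to be ACL; the paper breaks this degeneracy by replacing $h$ with piecewise $\rho$-harmonic maps, which are smooth with nondegenerate Jacobian on the interior of each piece, and it is precisely this interior regularity --- not any rearrangement --- that makes the inverse land in $W^{1,1}$ with finite distortion. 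Nothing in your construction produces it.
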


\begin{proof} We proceed as in \cite[Lemma~7]{ar}, however due to the different intrinsic geometry, we should provide a different proof.
A domain $G$ is said to be convex with respect to the metric $\rho$,
if for $z,w\in G$, there exists a geodesic line $l[z,w]\subset G$,
$z,w\in l[z,w]$ with respect to the metric $\rho$.  By following an
argument of Buser (\cite[Pages~116-121]{bus}, see also
\cite[The~proof~of~Theorem~1]{bus1}) there exists a triangulation of
the Riemann surface $(A', \rho)$ by a locally finite set of
triangles $\Delta_j$ $j = 1, \dots, m$ with pairwise disjoint
interiors, such that the edges of triangles are geodesic arcs or
part of boundary (such domains are lipschitz and convex domains (if
the diameter is small enough)). Also we can assume that
$$\mathrm{diam}_\rho(\Delta_j):=\sup\{d_{\rho}(z,w): z,w\in \Delta_j\}<\varepsilon$$ where $\varepsilon$ is a positive constant
to be chosen later.

Let us sketch the construction of these geodesic triangles. Let
$\mathcal P$ be a finite set of points in $A$ such that any two
points of $\mathcal P$ lie at $\rho$-distance $\le \varepsilon$ and
$\ge \varepsilon/2$ from each other.

Such a set $\mathcal P$ is easily obtained by successively marking
points in $\overline{A'}$ at pairwise distances $\ge \varepsilon/2$
until there is no more room for such points. The set $\mathcal P$ is
finite because the $\rho$-diameter of $A'$ is finite. We add to the
set $\mathcal P$ a maximal finite set of boundary points at pairwise
distance between $\varepsilon/2$ and $\varepsilon$. For arbitrary
triple of points $a,b,c$ from $\mathcal P$ we can construct a
(possible degenerated) triangle $\Delta_i, i=1,\dots, n,$
$n=\binom{|\mathcal P|}{3}$ defined as follows. If $a,b,c$ are
inside $A'$ then the wedges are geodesic. If for example $a,b$
belong to the same connected component of $
\partial A'$, then the wedge $ab$ is the smaller boundary arc bounded by $a$ and $b$.  We exclude all degenerated triangles, those who contain
more than three points inside and those whose diameter is larger
than $\varepsilon$. Thus we get the family $\Delta_j$, $j = 1,
\dots, m$. Then each $\Delta_j$ is contained in a geodesic disk
$B_\varepsilon(p)$ with center at $p\in\Delta_j$ and radius
$\varepsilon$.

Through the homeomorphism $h$ we have a decomposition
$$D= \bigcup_{k=1}^m \overline{D_k},$$ where $\overline{D_k} =
h^{-1}(\overline{\Delta_k})$

Let the Gauss curvature of the metric $\rho$ satisfies $K\le
\kappa^2.$ Let $\mathcal C$ be a positive constant, smaller or equal
to the minimal distance of a given point $p\in A'$ from its
cut-locus. Because the metric is radial, the cut-locus of a point
$p\in A'$ is the arc $[-\tau p,-\sigma p]$. Since the metric
satisfies the condition \eqref{bound} it follows that
$$\mathcal C=\inf\{d_\rho(x,-y): \tau\le x,y\le \sigma\}>0.$$ Take
\begin{equation}\label{eps}\varepsilon<\min\left\{\frac{\pi}{2\kappa},\mathcal C\right\}.\end{equation} Consider the Dirichlet problem of
finding a $\rho-$harmonic map $h_j\colon D_j\rightarrow A'$ with the
given boundary values: $h_j|_{\partial D_j}=h$. Since $h_j(\partial
D_j)$ is contained in a geodesic disk $B_\varepsilon(p)$ with:
\begin{enumerate} \item{} radius $\varepsilon<\pi/(2\kappa)$;
\item{} the cut locus of the centre $p$ disjoint from $B_\varepsilon(p)$,
\end{enumerate} by a result of Hildebrandt, Kaul and Widman \cite{hil} this
Dirichlet problem has a solution contained in $B_\varepsilon(p)$.

Moreover by a result of Jost \cite{jj} we obtain that, since
$h_j\colon\partial D_j\rightarrow A'$ is a homeomorphism onto a
Lipschitz convex curve $\partial\Delta_j$, then the above solution
$h_j$ is a homeomorphism.

Let $$\tilde h=\sum_{j=1}^m h_j(z) \chi_{\overline D_j}(z).$$ Then
$\tilde h$ is a homeomorphism by construction. By using the well
known energy estimates we have
$$\int_{D_j} \|D h_j\|^2 \rho^2(z)dm(z)\le \int_{D_j} \|D h\|^2
\rho^2(z)dm(z).$$ Let $\tilde f={\tilde h}^{-1}$.  Proceeding as in
\cite[Lemma~7]{ar}, we obtain
$$\mathcal K_\rho[\tilde f]\le E_\rho[h].$$

To continue observe that
$$E[h] =\int_{D}(|h_z|^2+|h_{\bar z}|^2)
dm(z)\le \int_{D} \frac{\rho^2\circ h}{\inf_{w\in
A'}\rho^2(w)}(|h_z|^2+|h_{\bar z}|^2) dm(z)<\infty.$$
 By using a recent result of Hencl,
Koskela and Onninen \cite{koskela}, we obtain that the homeomorphism
$f: A' \to D$ of integrable distortion in the Sobolev class
$W^{1,1}(A', D)$ has its inverse $\tilde h\in W^{1,2}(D,A')$. By
introducing the change of variables $w=\tilde h(z)$, proceeding as
in \cite{koskela}, by making use of formulas $$\|B\| = \|B^T \|, \;
\left(A^T\right)^{-1}  \det A = \mathrm{adj}\, A,\; \|\mathrm{adj}\,
A\| = \|A\|
$$ we obtain the identity
$$\mathcal K_\rho[\tilde f]= E_\rho[\tilde h].$$ This completes the proof of the
Lemma~\ref{corect}.

\end{proof}
\begin{remark}
We think that some of the conditions of Lemma~\ref{corect}
concerning the metric $\rho$ are superfluous.
\end{remark}

\section{Appendix} In this section we will give some examples of
\emph{regular} metrics.
\subsection{Hyperbolic metrics}\label{hypo} For every hyperbolic
Riemann surface, the fundamental group is isomorphic to a Fuchsian
group, and thus the surface can be modeled by a Fuchsian model $\Bbb
U/\Gamma$, where $\Bbb U$ is the unit disk and $\Gamma$ is the
Fuchsian group (\cite{las}). If $\Omega$ is a hyperbolic region in
the Riemann sphere $\overline{\Bbb C}$; i.e., $\Omega$ is open and
connected with its complement $\Omega^c := \overline{\Bbb
C}\setminus \Omega$ possessing at least three points. Each such
$\Omega$ carries a unique maximal constant curvature $-1$ conformal
metric $\lambda |dz| = \lambda_\Omega(z)|dz|$ referred to as the
Poincar\'e hyperbolic metric in $\Omega$. The domain monotonicity
property, that larger regions have smaller metrics, is a direct
consequence of Schwarz's Lemma. Except for a short list of special
cases, the actual calculation of any given hyperbolic metric is
notoriously difficult.

By the formula

$$\rho_\Sigma(z) = h(|z|^2),$$
we obtain that the Gauss curvature is given by
$$K = \frac{4(|z|^2{h'}^2 - |z|^2hh''-hh')}{h^4}.$$
Setting $t = |z|^2$, we obtain that
\begin{equation}K =
-\frac{1}{h^2}\left(\frac{4th'(t)}{h}\right)'.\end{equation} As
$K\le 0$ it follows that
$$\left(\frac{4th'(t)}{h}\right)'\ge 0.$$
Therefore the function $$\frac{4th'(t)}{h}$$ is increasing, i.e.
\begin{equation}\label{incre}
t\ge s \Rightarrow \frac{4th'(t)}{h(t)} \ge \frac{4sh'(s)}{h(s)},
\end{equation}
and in particular \begin{equation}\label{incre0} t\ge 0 \Rightarrow
\frac{4th'(t)}{h(t)} \ge 0.
\end{equation}
{\it In this case we obtain that $h$ is an increasing function.}

The examples of hyperbolic surfaces are:

a)  The Poincar\'e disk $\Bbb U$ with the hyperbolic metric
$$\lambda = \frac{2}{1-|z|^2}.$$

b) The punctured hyperbolic unit disk $\Delta = \Bbb U
\setminus\{0\}$. The linear density of the hyperbolic metric on
$\Delta$ is $$\lambda_\Delta = \frac{1}{|z|\log\frac{1}{|z|}}.$$

c) The hyperbolic annulus $A(1/R,R)$, $R>1$. The hyperbolic metric
is given by

$$ h_R(|z|^2)=\lambda_R(z) = \frac{\pi/2}{|z|\log R}\sec\left(
\frac{\pi}{2}\frac{\log|z|}{\log R}\right).$$

In all these cases the Gauss curvature is $K = -1$.

\subsection{Riemann metrics} In the case of the Riemann sphere, the
Gauss-Bonnet theorem implies that a constant-curvature metric must
have positive curvature $K$. It follows that the metric must be
isometric to the sphere of radius
 $1/\sqrt K$ in $\mathbb{R}^3$ via stereographic projection.

d) In the $z$-chart on the Riemann sphere, the metric with $K = 1$
is given by
$$ds^2=h_R^2(|z|^2)|dz|^2 = \dfrac{4|dz|^2}{(1+|z|^2)^2}.$$

e)  Another important case is Hamilton cigar soliton or in physics
is known as {\it Wittens's black hole}. It is a K\"ahler metric
defined on $\Bbb C$.
$$ds^2=h^2(|z|^2)|dz|^2 = \frac{|dz|^2}{1 + |z|^2}.$$ The Gauss curvature is given by
$$K = \frac{2}{1+|z|^2}.$$
In both these cases $K> 0$. This means that
$$\left(\frac{4th'(t)}{h}\right)'\le
0.$$ Therefore the function $$\frac{4th'(t)}{h}$$ is decreasing i.e.
\begin{equation}\label{decre}
t\ge s \Rightarrow \frac{4th'(t)}{h(t)} \le \frac{4sh'(s)}{h(s)}.
\end{equation}
{\it In this case we obtain that $h$ is a decreasing function.}

\end{document}